\newcommand{\N}{\ensuremath{{\mathbb N}}}
\newcommand{\C}{\ensuremath{{\mathbb C}}}
\newcommand{\bm}[1]{{\mbox{{\boldmath$#1$}}}}        % Fettdruck in Formeln
\newtheorem{satz}{Theorem}[section]
\newtheorem{defi}{Definition} [section]
\newtheorem{lem}[satz]{Lemma} 
\newtheorem{prop}[satz]{Proposition} 
\newtheorem{koro}[satz]{Corollary} 
\newtheorem{anmerk}[satz]{Remark}
\newcommand{\R}{\ensuremath{{\mathbb R}}}
\DeclareMathOperator*{\Exp}{Exp}
\begin{document}
%\title{An Elementary Approach to Optimal Stopping Problems for AR(1) Sequences}\\
\begin{center}
{\LARGE{\textbf{Phasetype distributions, autoregressive processes and overshoot}}}
\end{center}
\begin{center}\vspace{.8cm}{\textbf{ S\"oren Christensen}} 

Christian-Albrechts-Universit\"at, Mathematisches Seminar, Kiel, Germany
\end{center}\vspace{.8cm}
%\maketitle
\textbf{Abstract:} Autoregressive processes are intensively studied in statistics and other fields of applied stochastics. For many applications the overshoot and the threshold-time are of special interest. When the upward innovations are in the class of phasetype distributions we determine the joint distribution of this two quantities and apply this result to problems of optimal stopping. Using a principle of continuous fit this leads to explicit solutions.\vspace{.8cm}

\textbf{Keywords:} Autoregressive; Threshold times; Phasetype innovations; Optimal stopping\vspace{.8cm}

\textbf{Subject Classifications:} 60G40; 62L15; 60G99

\section{Introduction}\label{sec:setting}
Autoregressive processes play an important role in many areas of applied probability and statistics. They can be seen as one of the building blocks for many models in time-series analysis and estimation and testing techniques are well developed. In this article we study the following setting:\\
Let $0<\lambda\leq 1$, $(Z_n)_{n\in\N}$ be a sequence of independent and identically distributed random variables on a probability space $(\Omega,\mathcal{A},P)$ and let $(\mathcal{F}_n)_{n\in\N}$ be the filtration generated by $(Z_n)_{n\in\N}$. Define the autoregressive process of order 1 (AR(1)-process) $(X_n)_{n\in\N_0}$ by 
\[X_n=\lambda X_{n-1}+Z_n\mbox{~~~for all $n\in\N$}\]
i.e.
\[X_n=\lambda^nX_0+\sum_{k=0}^{n-1}\lambda^kZ_{n-k}.\]
The random variables $(Z_n)_{n\in\N}$ are called the innovations of $(X_n)_{n\in\N_0}$. Using the difference notation the identity $X_n=\lambda X_{n-1}+Z_n$ can be written as 
\[\Delta X_{n}=-(1-\lambda)X_{n-1} \Delta n+\Delta L_{n},\]
where $\Delta X_{n}=X_{n}-X_{n-1}$, $\Delta n=n-(n-1)=1$ and $\Delta L_{n}=\sum_{k=1}^{n}Z_k-\sum_{k=1}^{n-1}Z_k=Z_{n}$.\\
This shows that AR(1)-processes are the discrete-time analogon to (Lévy-driven) Ornstein-Uhlenbeck processes. We just want to mention that many arguments in the following can be carried over to Ornstein-Uhlenbeck processes as well.\\
Autoregressive processes were studied in detail in the last decades. The joint distribution of the threshold-time
\[\tau_b=\inf\{n\in\N_0:X_n\geq b\}\]
and the overshoot 
\[X_{\tau_b}-b\mbox{~~over a fixed level $b$}\]
was of special interest. If $\lambda=1$ the process $(X_n)_{n\in\N_0}$ is a random walk and many results about this distributions are well known. Most of them are based on techniques using the Wiener Hopf-factorization -- see \cite[Chapter VII]{F}  for an overview. Unfortunately no analogon to the Wiener-Hopf factorization is known for AR(1)-processes, so that other ideas are necessary. To get rid of well-studied cases we assume that $\lambda<1$ in the following.\\

This first passage problem for AR(1)-processes was considered in different applications, such as signal detection and surveillance analysis, cf. \cite{FS}. In applications the distribution of this quantities are approximated using Monte-Carlo simulations or Markov chain approximations, cf. e.g. \cite{RP}. But e.g. for questions of optimization analytic solutions are necessary.\\
Using martingale techniques exponential bounds for the expectation of $\tau_b$ can be found. Most of these results are based on martingales defined by using integrals of the form
\begin{equation}
\int_0^\infty e^{uy-\phi(u)}u^{v-1}du\label{eq:int_novikov}
\end{equation}
where $\phi$ is the logarithm of the Laplace transform of the stationary distribution discussed in Section \ref{martingales}. For the integral to be well defined it is necessary that $E(e^{uZ_1})<\infty$ for all $u\in[0,\infty)$ -- cf. \cite{NK} and the references therein.\\
On the other hand if one wants to obtain explicit results about the joint distribution of $\tau_b$ and the overshoot it is useful to assume $Z_1$ to be exponentially distributed. In this case explicit results are given in \cite[Section 3]{CIN} by setting up and solving differential equations. Unfortunately in this case not all exponential moments of $Z_1$ exist and the integral described above cannot be used.\\

The contribution of this article is twofold:
\begin{enumerate}
\item We find the joint distribution of $\tau_b$ and the overshoot for a wide class of innovations: We assume that $Z_1=S_1-T_1$, where $S_1$ and $T_1$ are independent, $S_1$ has a phasetype distribution and $T_1\geq0$ is arbitrary. This generalizes the assumption of exponentially distributed innovations to a much wider class. In Section \ref{phasetype} we establish that $\tau_b$ and the overshoot are -- conditioned on certain events -- independent and we find the distribution of the overshoot. In Section \ref{martingales} we use a series inspired by the integral (\ref{eq:int_novikov}) to construct martingales with the objective of finding the distribution of $\tau_b$.\\ This leads to explicit expressions for expectations of the form $E_x(\rho^{\tau_b}g(X_{\tau_b}))$ for general functions $g$ and $\rho\in(0,1)$.
\item As an application we consider the (Markovian) problem of optimal stopping for $(X_n)_{n\in\N_0}$ with discounted non-negative continuous gain function $g$, i.e. we study the optimization problem
\[v(x)=\sup_{\tau\in\mathcal{T}} E_x(\rho^\tau g(X_\tau))=\sup_{\tau\in\mathcal{T}} E(\rho^\tau g(\lambda^\tau x+X_\tau)),~~x\in\R,~~0<\rho<1,\]
where $\mathcal{T}$ denotes the set of stopping times with respect to $(\mathcal{F}_n)_{n\in\N_0}$; to simplify notation here and in the following we set the payoff equal to $0$ on $\{\tau=\infty\}$. Just very few results are known for this problem. In \cite{N} and \cite{CIN} the innovations are assumed to be exponentially distributed and in \cite{F2} asymptotic results were given for $g(x)=x$.\\
Following the approach described in \cite{CIN} the problem can be reduced to determining an optimal threshold. This is summarized in Section \ref{threshold}. In a second step we use the joint distribution of $\tau_b$ and the overshoot to find the optimal threshold. To this end we use the principle of continuous fit, that is established in Section \ref{cont_fit}. An example is given in Section \ref{example_autoreg}.
\end{enumerate}

\section{Innovations of phasetype}\label{phasetype}
In this section we recall some basic properties of phasetype distributions and identify the connection to AR(1)-processes. In the first subsection we establish the terminology and state some well-known results, that are of interest for our purpose. All results can be found in \cite{As} discussed from the perspective of queueing theory.\\
In the second subsection we concentrate on the threshold-time distribution for autoregressive processes when the positive part of the innovations is of phasetype. The key result for the next sections is that -- conditioned to certain events -- the threshold-time is independent of the overshoot and the overshoot is phasetype distributed as well.

\subsection{Definition and some properties}
Let $m\in\N$, $E=\{1,...,m\}$, $\Delta=m+1$ and $E_\Delta=E\cup\{\Delta\}$.\\
In this subsection we consider a Markov chain $(J_t)_{t\geq0}$ in continuous time with state space $E_\Delta$. The states $1,...,m$ are assumed to be transient and $\Delta$ is absorbing. Denote the generator of $(J_t)_{t\geq0}$ by $\hat{Q}=(q_{ij})_{i,j\in E_\Delta}$, i.e.
\begin{align*}
\hat{q}_{ij}(h):=P(J_{t+h}=j|J_t=i)=q_{ij}h+o(h)\mbox{~for all $i\not=j\in E_\Delta$ and}\\
\hat{q}_{ii}(h):=P(J_{t+h}=i|J_t=i)=1+q_{ii}h+o(h)\mbox{~for all $i\in E_\Delta$ and $h\rightarrow 0, t\geq 0$.}
\end{align*}
If we write $\hat{Q}(h)=(\hat{q}_{ij}(h))_{i,j\in E_\Delta}$ for all $h\geq 0$, then $(\hat{Q}(h))_{h\geq0}$ is a semigroup and the general theory yields that 
\[\hat{Q}(h)=e^{\hat{Q}h}\mbox{~for all $h\geq 0$.}\]
Since $\Delta$ is assumed to be absorbing $\hat{Q}$ has the form 
\[\hat{Q}=\begin{pmatrix} Q & -Q \bm{1} \\ 0...0 & 0 \end{pmatrix}\]
for an $m\times m$-matrix $Q$, where $\bm{1}$ denotes the 	column-vector with entries 1.\\
We consider the survival time of $(J_t)_{t\geq0}$, i.e. the random variable
\[\eta=\inf\{t\geq0:J_t=\Delta\}.\]
Let $\hat{\alpha}=(\alpha,0)$ be an initial distribution of $(J_t)_{t\geq0}$. Here and in the following $\alpha=(\alpha_1,...,\alpha_m)$ is assumed to be a row-vector.

\begin{defi} $P_{\hat{\alpha}}^\eta$ is called a distribution of phasetype with parameters $(Q,\alpha)$ and we write $P_{\hat{\alpha}}^\eta=PH(Q,\alpha)$ for short.
\end{defi}

Let $m=1$ and $Q=(-\beta)$ for a parameter $\beta>0$. In this case it is well-known that $\eta$ is exponentially distributed with parameter $\beta$. This special case will be the key example we often think of. Furthermore let us mention that the class of phasetype distributions is stable under convolutions and mixtures. This shows that the important classes of Erlang- and hyperexponential distributions are of phasetype.\\
Exponential distributions have a very special structure, but phasetype distributions are flexible:

\begin{prop}
The distributions of phasetype are dense in the space of all probability measures on $(0,\infty)$ with respect to convergence in distribution.
\end{prop}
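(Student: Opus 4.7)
The plan is a two-level approximation: first approximate an arbitrary probability measure on $(0,\infty)$ by one with finite support, then approximate each atom by an Erlang distribution concentrating on it, and finally combine everything into a single mixture, which is phasetype by the closure properties just mentioned.

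First, I would recall that the set of probability measures on $(0,\infty)$ with finite support is weakly dense in the set of all probability measures on $(0,\infty)$ -- a standard fact proved by choosing, for a given $\mu$, a weakly convergent sequence $\mu_k=\sum_{i=1}^{N_k}p_i^{(k)}\delta_{t_i^{(k)}}$ (e.g.\ by restricting $\mu$ to $[1/k,k]$, binning on a grid of mesh $1/k$, and renormalising). Hence it is enough to show that every finitely supported measure $\sum_{i=1}^{N}p_i\delta_{t_i}$ with $t_i>0$ is a weak limit of phasetype distributions.

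Second, the key building block is the Erlang$(n,n/t)$ distribution, i.e.\ the $n$-fold convolution of $\mathrm{Exp}(n/t)$: this is a phasetype distribution (exponentials are phasetype with $m=1$, $Q=(-n/t)$, and phasetype distributions are closed under convolution as noted after Definition 2.1). An $\mathrm{Erlang}(n,n/t)$-distributed random variable has mean $t$ and variance $t^2/n$, so by Chebyshev it converges weakly to $\delta_t$ as $n\to\infty$. Given $\varepsilon>0$ and $N$ atoms, choose $n_1,\dots,n_N$ so large that, for any bounded continuous test function $f$ on $(0,\infty)$, $|\int f\,d(\mathrm{Erlang}(n_i,n_i/t_i))-f(t_i)|<\varepsilon$ for each $i$ (made uniform later by a standard tightness/compactness argument restricted to representative test functions).

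Third, form the mixture $\nu=\sum_{i=1}^{N}p_i\,\mathrm{Erlang}(n_i,n_i/t_i)$, which is phasetype because the class is closed under mixtures. Then for any bounded continuous $f$,
\[
\Bigl|\int f\,d\nu-\int f\,d\mu\Bigr|\le\sum_{i=1}^{N}p_i\,\Bigl|\int f\,d(\mathrm{Erlang}(n_i,n_i/t_i))-f(t_i)\Bigr|+\Bigl|\int f\,d\mu^{\ast}-\int f\,d\mu\Bigr|,
\]
where $\mu^{\ast}=\sum_i p_i\delta_{t_i}$ is the finite-support approximation. Both terms can be made arbitrarily small, proving weak convergence. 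A diagonal argument produces a single sequence of phasetype distributions converging weakly to $\mu$.

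The only mildly delicate point is bookkeeping: one must ensure that the bound on the Erlang-approximation step holds simultaneously for enough test functions to imply weak convergence, which is done by passing to a metric generating the weak topology (e.g.\ the L\'evy--Prokhorov metric) and applying a diagonal extraction. Otherwise the argument is entirely elementary, relying only on the law of large numbers and on closure of phasetype distributions under convolutions and mixtures.
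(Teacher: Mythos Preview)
Your argument is correct and is essentially the classical proof: approximate an arbitrary law by a finitely supported one, replace each atom $\delta_t$ by an Erlang$(n,n/t)$ distribution (which concentrates at $t$ by the law of large numbers), and observe that finite mixtures of Erlangs are phasetype. The paper does not give its own proof but simply refers to Asmussen \cite[III, Theorem 4.2]{As}, where exactly this Erlang-mixture approximation is carried out; so your proposal reproduces the referenced argument rather than deviating from it. The bookkeeping you flag (uniformity over test functions, diagonal extraction) is routine and can be sidestepped entirely by working directly with the L\'evy--Prokhorov metric from the start.
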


\begin{proof}
See \cite[III, Theorem 4.2]{As}.
\end{proof}

The definition of phasetype distributions does not give rise to an obvious calculus with these distributions, but the theory of semigroups leads to simple formulas for the density and the Laplace-transform as the next lemma shows. All the formulas contain matrix exponentials. The explicit calculation of such exponentials can be complex in higher dimensions, but many algorithms are available for a numerical approximation.

\begin{prop} \label{prop:PH_Vert}
\begin{enumerate}[(a)]
\item The eigenvalues of $Q$ have negative real part.
\item The distribution  function of $PH(Q,\alpha)$ is given by 
\[H_\alpha(s):=P_\alpha(\eta\leq s)=1-\alpha e^{Qs}\bm{1},~~~s\geq0.\]
\item The density is given by
\[h_\alpha(s)=\alpha e^{Qs}q,~~~~s\geq0\]
where $q=-Q\bm{1}$.
\item For all $s\in \C$ with $E_{\hat{\alpha}}(e^{\Re(s) \eta})<\infty$ it holds that
\[\hat{H}_\alpha(s):=E_{\hat{\alpha}}(e^{s\eta})=\alpha(-sI-Q)^{-1}q,\]
where $I$ is the $m\times m$-identity matrix.\\
In particular $\hat{H}_\alpha$ is a rational function.
\end{enumerate}
\end{prop}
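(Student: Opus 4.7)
The plan is to derive all four statements from the block structure of $\hat Q$ and the identity $\hat Q(h)=e^{\hat Q h}$. Writing
\[
\hat Q=\begin{pmatrix} Q & -Q\bm{1}\\ 0 & 0\end{pmatrix},
\]
a direct computation (or induction on the power series for the exponential) shows that the upper-left $m\times m$ block of $e^{\hat Q s}$ equals $e^{Q s}$ and the upper-right column equals $\bm{1}-e^{Qs}\bm{1}$, so the row sums of $e^{\hat Q s}$ come out to $1$ as they must. This block identity is the workhorse of everything that follows.

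For part (a), the idea is to exploit that the states $1,\dots,m$ are assumed transient, so starting in $E$ the chain almost surely eventually hits $\Delta$; hence for every $i\in E$ the $i$-th row sum of $e^{Qs}$, which equals $P_i(J_s\in E)=P_i(\eta>s)$, tends to $0$ as $s\to\infty$. Thus $e^{Qs}\to 0$ entry-wise, which is equivalent to all eigenvalues of $Q$ lying strictly in the open left half-plane.

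For (b), observe that
\[
P_{\hat\alpha}(\eta>s)=P_{\hat\alpha}(J_s\in E)=\sum_{i,j\in E}\alpha_i(e^{Qs})_{ij}=\alpha\, e^{Qs}\bm{1},
\]
using the block identity above; then $H_\alpha(s)=1-\alpha e^{Qs}\bm{1}$. Part (c) follows by differentiating:
\[
h_\alpha(s)=\frac{d}{ds}H_\alpha(s)=-\alpha Q e^{Qs}\bm{1}=\alpha e^{Qs}(-Q\bm{1})=\alpha e^{Qs}q.
\]
For (d), under the integrability assumption $E_{\hat\alpha}(e^{\Re(s)\eta})<\infty$ the spectral condition from (a) forces $\Re(s+\mu)<0$ for every eigenvalue $\mu$ of $Q$, so $sI+Q$ is invertible and $\int_0^\infty e^{(sI+Q)t}dt=-(sI+Q)^{-1}$. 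Plugging this into
\[
\hat H_\alpha(s)=\int_0^\infty e^{st}\alpha e^{Qt}q\,dt=\alpha\bigl(-sI-Q\bigr)^{-1}q
\]
gives the formula, and rationality in $s$ is immediate from Cramer's rule.

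The only genuinely delicate point I see is making the block-exponential identity rigorous and simultaneously justifying the convergence of the matrix integral in (d) on the exact half-plane determined by the assumption $E_{\hat\alpha}(e^{\Re(s)\eta})<\infty$; once the eigenvalue information in (a) is in hand, the rest amounts to differentiating and integrating a matrix exponential, which is routine. Everything else is bookkeeping with the block form of $\hat Q$, and since these facts are standard in the phasetype literature I would keep the write-up brief and refer to \cite{As} for the details.
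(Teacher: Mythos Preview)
Your sketch is correct and is precisely the standard argument one finds in Asmussen; the paper itself gives no proof at all but simply cites \cite[II, Corollary 4.9 and III, Theorem 4.1]{As}, so your closing suggestion to ``keep the write-up brief and refer to \cite{As} for the details'' is exactly what the paper does. The one point you flag as delicate in (d)---that the integrability hypothesis $E_{\hat\alpha}(e^{\Re(s)\eta})<\infty$ forces $\Re(s)+\Re(\mu)<0$ for \emph{every} eigenvalue $\mu$ of $Q$---can fail in degenerate representations where some eigenvalue does not contribute to the density $\alpha e^{Qt}q$, but this is a wrinkle in the statement rather than in your argument, and the cited reference handles it the same way.
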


\begin{proof}
See \cite[II, Corollary 4.9 and III, Theorem 4.1]{As}.
\end{proof}

An essential property for the applicability of the exponential distribution in modeling and examples is the memoryless property, which even characterizes the exponential distribution. The next lemma can be seen as a generalization of this property to distributions of phasetype.

\begin{lem}\label{lem:gedächtnislosigkeit}
Let $t\geq 0$ and write 
\[H_\alpha^t(s)=P_{\hat{\alpha}}(\eta\leq s+t|\eta\geq t)\mbox{~~for all~$s\geq0$}.\]
Then $H_\alpha^t$ is a 	distribution function of a phasetype distribution with parameters $(Q,\pi^t)$, where $\pi^t_i=P_{\hat{\alpha}}(J_t=i|\eta\geq t)$ for all $i=1,...,m$.
\end{lem}

\begin{proof}
By Proposition \ref{prop:PH_Vert} the random variable $\eta$ has a continuous distribution. Therefore using the Markov-property of $(J_t)_{t\geq0}$ we obtain
\begin{align*}
P_{\hat{\alpha}}(\eta\leq t+s|\eta\geq t)&=P_{\hat{\alpha}}(\eta\leq t+s|\eta>t)\\
&=\sum_{i\in E} P_{\hat{\alpha}}(\eta\leq t+s,J_t=i)\frac{1}{P_{\hat{\alpha}}(\eta>t)}\\
&=\sum_{i\in E} P_{\hat{\alpha}}(\eta\leq t+s|J_t=i)\frac{P_{\hat{\alpha}}(J_t=i)}{P_{\hat{\alpha}}(\eta>t)}\\
&=\sum_{i\in E} P_{e_i}(\eta\leq s)\pi_i^t=\sum_{i\in E} \pi_i^t H_{e_i}(s),
\end{align*}
where $e_i$ is the $i$-th unit vector.
\end{proof}

For the application to autoregressive processes we need the generalization of the previous lemma to the case that the random variable is not necessarily positive.

\begin{lem}\label{lem:gedächtnislos2}
Let $S,T\geq 0$ be stochastically independent random variables, where $S$ is $PH(Q,\alpha)$-distributed. Furthermore let $r\geq0$ and $Z=S-T$. Then 
\[P_{\hat{\alpha}}(r\leq Z\leq r+s)=\sum_{i\in E}\lambda_i(r) H_{e_i}(s),\mbox{~~~$s\geq0$},\]
where $\lambda_i(r)=\int P_{\hat{\alpha}}(J_{r+t}=i) P_{\hat{\alpha}}^T(dt)$. 
\end{lem}

\begin{proof}
Using Lemma \ref{lem:gedächtnislosigkeit} it holds that 
\begin{align*}
P_{\hat{\alpha}}(r\leq Z\leq r+s)&=\int P_{\hat{\alpha}}(r+t\leq S\leq r+s+t)P_{\hat{\alpha}}^T(dt)\\
&=\sum_{i\in E}\int H_{e_i}(s)P_{\hat{\alpha}}(J_{t+r}=i)P_{\hat{\alpha}}^T(dt).
\end{align*}
\end{proof}

\iffalse
Before applying the previous results to AR(1)-processes, we first make some remarks on the parameters of a phasetype distribution. As a motivation note that each distribution is the same as an arbitrary mixture of the distribution with itself. E.g. both the parameter-pairs $((-\mu),1)$ and $\left(\begin{pmatrix} -\mu & 0 \\ 0 & -\mu \end{pmatrix},(1/2,1/2)\right)$ represent the same exponential distribution, but the pair $((-\mu),1)$ seems to be more natural.

\begin{defi}
A pair of parameters $(Q,\alpha)\in \R^{m\times m}\times\R^m$ of a phasetype distribution is called minimal, if for all $k<m$ and $(\tilde{Q},\tilde{\alpha})\in \R^{k\times k}\times\R^k$ it holds that $(\tilde{Q},\tilde{\alpha})$ is no pair of parameters representing $PH(Q,\alpha)$, i.e. in the view of Proposition \ref{prop:PH_Vert}
\[\alpha e^{Qx}\bm{1}\not=\tilde{\alpha} e^{\tilde{Q}x}\bm{1}\mbox{~for some $x>0$}.\]
\end{defi}
\fi

\subsection{Phasetype distributions and overshoot of AR(1)-processes}
We again consider the situation of Section \ref{sec:setting}. In addition we assume the innovations to have the following structure:
\[Z_n=S_n-T_n\mbox{~~~~~~for all $n\in\N$,}\]
where $S_n$ and $T_n$ are non-negative and independent and $S_n$ is $PH(Q,\alpha)$-distributed. In this context we remark that each probability measure $Q$ on $\R$ with $Q(\{0\})=0$ can be written as $Q=Q_+*Q_-$ where $Q_+$ and $Q_-$ are probability measures with $Q_+((-\infty,0))=Q_-((0,\infty))=0$ and $*$ denotes convolution (cf. \cite[p.383]{F}).

As a motivation we consider the case of exponentially distributed innovations. If $Z_n$ is exponentially distributed then it holds that for all $\rho\in(0,1]$ and measurable $g:\R\rightarrow[0,\infty)$
\begin{equation}\label{eq:unabh_exponential}
E_x(\rho^\tau g(X_\tau))=E_x(\rho^\tau)E(g(R+b)),
\end{equation}
where $\tau=\tau_b$ is a threshold-time, $x< b$ and $R$ is exponentially distributed with the same parameter as the innovations (cf. \cite[Theorem 3.1]{CIN}). This fact is well known for random walks, cf. \cite[Chapter XII.]{F}. The representation of the joint distribution of overshoot and $\tau$ reduces to finding a explicit expression of the Laplace-transform of $\tau$. In this subsection we prove that a generalization of this phenomenon holds in our more general situation.\\

To this end we use an embedding of $(X_n)_{n\in\N_0}$ into a stochastic process in continuous time as follows:\\
For all $n\in\N$ denote the Markov chain which generates the phasetype-distribution of $S_n$ by $(J^{(n)}_t)_{t\geq0}$ and write 
\[J_t=J^{(n_t+1)}_{t-\sum_{k=1}^{n_t}S_k},\mbox{~where~}n_t=\max\{n\in\N_0:\sum_{k=1}^nS_k\leq t\}\mbox{~~for all $t\geq 0$ }.\]
Hence the process $(J_t)_{t\geq0}$ is constructed by compounding the processes $J^{(n)}$ restricted to their lifetime. Obviously $(J_t)_{t\geq0}$ is a continuous time Markov chain with state space $E$, as one immediately checks, cf. \cite[III, Proposition 5.1]{As}. Furthermore we define a process $(Y_t)_{t\geq0}$ by
\[Y_t=\lambda X_{n_t}-T_{n_t+1}+t-\sum_{k=1}^{n_t} S_{k}.\]
See Figure \ref{fig:Y} for an illustration. It holds that
\[X_n=Y_{(S_1+...+S_n)-}\mbox{~~~for all $n\in\N$,}\]
so that we can find $(X_n)_{n\in\N_0}$ in $(Y_t)_{t\geq0}$. Now let $\hat{\tau}$ be the threshold-time of the process $(Y_t)_{t\geq0}$ over the threshold $b$, i.e.
\[\hat{\tau}=\inf\{t\geq 0: Y_t\geq b\}.\]
By definition of $(Y_t)_{t\geq0}$ it holds that
\begin{equation}\label{eq:char_tau_dach}
Y_t=b~~ \Leftrightarrow ~~t=-\lambda X_{n_t}+T_{n_t+1}+\sum_{k=1}^{n_t}S_k+b\mbox{~for all~} t\geq0.
\end{equation}

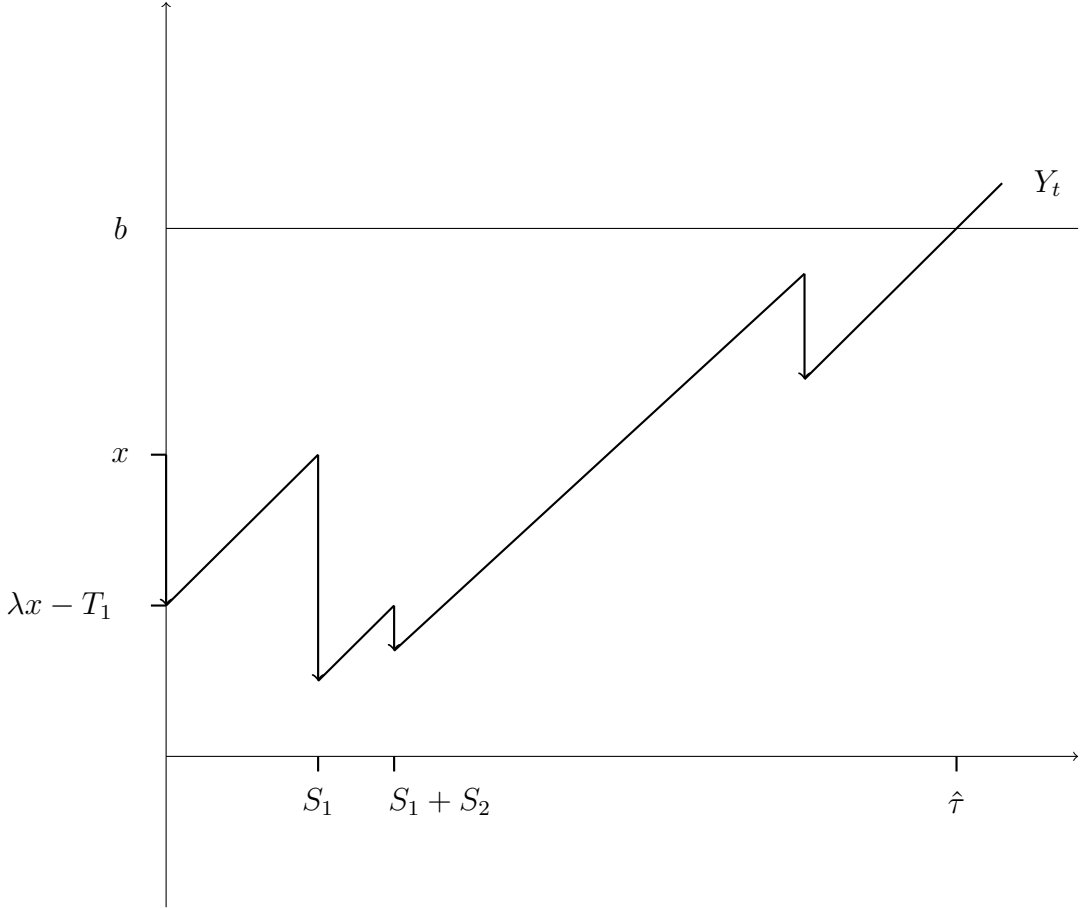
\begin{figure}[ht]
\begin{center}

\begin{tikzpicture}[scale=2]
%\begin{scope}[very thin]
\draw [->] (0,0) -- (6,0);
\draw [->] (0,-1) -- (0,5);
\draw (0,3.5) -- (6,3.5);
%\end{scope}[very thin]
\begin{scope}[thick]
\draw (-0.3,3.5) node {$b$};
\draw (-0.3,2) node {$x$};
\draw (-0.1,2) -- (0,2);
\draw [->] (0,2) -- (0,1);
\draw (-0.7,1) node {$\lambda x-T_1$};
\draw (-0.1,1) -- (0,1);
\draw  (0,1) -- (1,2);
\draw [->](1,2) -- (1,0.5);
\draw (1,-0.3) node {$S_1$};
\draw (1,-0.1) -- (1,0);
\draw (1,0.5) -- (1.5,1);
\draw [->](1.5,1) -- (1.5,0.7);
\draw (1.8,-0.3) node {$S_1+S_2$};
\draw (1.5,-0.1) -- (1.5,0);
\draw (1.5,0.7) -- (4.2,3.2);
\draw [->](4.2,3.2) -- (4.2,2.5);
\draw (4.2,2.5) -- (5.5,3.8);
\draw (5.8,3.8) node {$Y_t$};
\draw (5.2,-0.3) node {$\hat{\tau}$};
\draw (5.2,-0.1) -- (5.2,0);
\end{scope}[thick]

\end{tikzpicture}
\caption{A path of $(Y_t)_{t\geq0}$}\label{fig:Y}
\end{center}
\end{figure}

For the following result we need the event that the associated Markov chain is in state $i$ when $(Y_t)_{t\geq0}$ crosses $b\geq 0$, i.e. the event \[G_i=\{J_{\hat{\tau}}=i\}\mbox{~~~for~}i\in E.\]
For the following considerations we fix the threshold $b\geq 0$. \\
In generalization of the result for exponential distributed innovations the following theorem states that -- conditioned on $G_i$ -- the threshold-time and the overshoot are independent and the overshoot is phasetype distributed as well.

\begin{satz}\label{satz:unabh}
Let $x<b$, $n\in\N$, $y\geq 0$ and write 
\[\tau=\tau_b=\inf\{n\in\N_0:X_n\geq b\}.\]
Then
\[P_x(X_\tau-b\leq y,\tau=n)=\sum_{i\in E}H_{e_i}(y)P_x(\tau=n,G_i).\]
\end{satz}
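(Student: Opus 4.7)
My plan is to reduce the $n$-step event to a one-step conditioning on $\mathcal{F}_{n-1}$ and invoke Lemma~\ref{lem:ged�chtnislos2} to extract the phasetype factor of the overshoot. I first note that $\{\tau>n-1\}=\{X_0<b,\dots,X_{n-1}<b\}$ is $\mathcal{F}_{n-1}$-measurable, while $Z_n=S_n-T_n$ is independent of $\mathcal{F}_{n-1}$. Setting $r_{n-1}:=b-\lambda X_{n-1}$, the identity $X_n=\lambda X_{n-1}+Z_n$ gives
\[
\{\tau=n,\ X_\tau-b\leq y\}=\{\tau>n-1\}\cap\{r_{n-1}\leq Z_n\leq r_{n-1}+y\};
\]
since $b\geq 0$ and $\lambda\in(0,1]$ one easily checks $r_{n-1}\geq 0$ on $\{\tau>n-1\}$.

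Applying Lemma~\ref{lem:ged�chtnislos2} conditionally (treating the $\mathcal{F}_{n-1}$-measurable quantity $r_{n-1}$ as a fixed nonnegative number) yields
\[
P_x\bigl(r_{n-1}\leq Z_n\leq r_{n-1}+y \,\big|\,\mathcal{F}_{n-1}\bigr)=\sum_{i\in E}H_{e_i}(y)\,\lambda_i(r_{n-1}),
\]
where $\lambda_i(r)=\int P(J^{(n)}_{r+t}=i)\,P^{T_n}(dt)$. Multiplying by $\mathbf{1}_{\{\tau>n-1\}}$, taking expectations and pulling out the deterministic factor $H_{e_i}(y)$ gives
\[
P_x(X_\tau-b\leq y,\ \tau=n)=\sum_{i\in E}H_{e_i}(y)\,E_x\bigl[\mathbf{1}_{\{\tau>n-1\}}\,\lambda_i(r_{n-1})\bigr].
\]

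The theorem then reduces to the identification $E_x[\mathbf{1}_{\{\tau>n-1\}}\,\lambda_i(r_{n-1})]=P_x(\tau=n,G_i)$, and for this the continuous-time embedding $(Y_t,J_t)$ is essential. On $\{\tau>n-1\}$, solving $Y_t=b$ in the $n$-th linear piece via \eqref{eq:char_tau_dach} forces $\hat{\tau}=\sigma_{n-1}+T_n+r_{n-1}$ with $\sigma_{n-1}:=S_1+\cdots+S_{n-1}$; moreover $\{\tau=n\}$ is precisely the event $\{\hat\tau\leq\sigma_n\}$, equivalently that $J^{(n)}$ has not yet absorbed by time $T_n+r_{n-1}$. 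Since therefore $J_{\hat\tau}=J^{(n)}_{T_n+r_{n-1}}$ on $\{\tau=n\}$, and since $(T_n,J^{(n)})$ is independent of $\mathcal{F}_{n-1}$, one obtains $P_x(\tau=n,G_i\mid\mathcal{F}_{n-1})=\mathbf{1}_{\{\tau>n-1\}}\,\lambda_i(r_{n-1})$, which closes the argument. The main obstacle is this last bookkeeping step: one must unpack carefully the construction of $(Y_t)_{t\geq 0}$ and $(J_t)_{t\geq 0}$ to confirm that $J_{\hat\tau}$ on $\{\tau=n\}$ really coincides with the time-shifted state $J^{(n)}_{T_n+r_{n-1}}$ and that the event $\{J^{(n)}_{T_n+r_{n-1}}\in E\}$, combined with $\{\tau>n-1\}$, pinpoints $\tau=n$.
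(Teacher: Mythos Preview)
Your proof is correct and follows essentially the same approach as the paper: condition on $\mathcal{F}_{n-1}$, apply Lemma~\ref{lem:ged�chtnislos2} to the one-step event $\{r_{n-1}\le Z_n\le r_{n-1}+y\}$, and then identify the resulting expectation with $P_x(\tau=n,G_i)$ via the embedding and \eqref{eq:char_tau_dach}. The paper compresses the final identification $E_x[\mathbf{1}_{\{\tau\ge n\}}P_x(J^{(n)}_{b-\lambda X_{n-1}+T_n}=i\mid\mathcal{F}_{n-1})]=P_x(\tau=n,J_{\hat\tau}=i)$ into a single line, whereas you spell out the bookkeeping more carefully; this is a virtue, not a difference in strategy.
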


\begin{proof}
Using Lemma \ref{lem:gedächtnislos2} and the identity (\ref{eq:char_tau_dach}) we obtain
\begin{align*}
P_x(X_\tau-b\leq y,\tau=n)&=E_x(\mathds{1}_{\{\tau\geq n\}}P_x(X_n\geq b,X_n-b\leq y|\mathcal{F}_{n-1}))\\
&=E_x(\mathds{1}_{\{\tau\geq n\}}P_x(b\leq X_n\leq b+y|\mathcal{F}_{n-1}))\\
&=E_x(\mathds{1}_{\{\tau\geq n\}}P_x(b-\lambda X_{n-1}\leq Z_n\leq b+y-\lambda X_{n-1}|\mathcal{F}_{n-1}))\\
&=E_x(\mathds{1}_{\{\tau\geq n\}}\sum_{i\in E} H_{e_i}(y)P_x(J^{(n)}_{b-\lambda X_{n-1}+T_n}=i|\mathcal{F}_{n-1}))\\
&=\sum_{i\in E} H_{e_i}(y) P_x(\tau=n,J_{\hat{\tau}}=i).
\end{align*}
\end{proof}

This immediately implies a generalization of (\ref{eq:unabh_exponential}) to the case of general phasetype distributions:

\begin{koro}\label{koro:unabh}
It holds that
\[E_x(\rho^\tau g(X_\tau))=\sum_{i\in E} E_x(\rho^\tau\mathds{1}_{G_i}) E(g(b+R^{i})),\]
where $R^{i}$ is a $Ph(Q,e_i)$-distributed random variable (under $P$).
\end{koro}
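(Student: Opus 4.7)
The plan is to deduce the corollary directly from Theorem \ref{satz:unabh} by standard measure-theoretic manipulations. Since the payoff is set to $0$ on $\{\tau=\infty\}$, I would first write
\[
E_x(\rho^\tau g(X_\tau)) = \sum_{n\in\N_0} \rho^n E_x\bigl(\mathds{1}_{\{\tau=n\}}\, g(X_\tau)\bigr),
\]
and then for each $n$ express $E_x(\mathds{1}_{\{\tau=n\}} g(X_\tau))$ as a Lebesgue-Stieltjes integral against the sub-probability measure $y\mapsto P_x(X_\tau-b\leq y,\tau=n)$ on $[0,\infty)$, using the substitution $X_\tau = b + (X_\tau - b)$ and noting that $X_\tau\geq b$ on $\{\tau<\infty\}$.

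Next I would plug in Theorem \ref{satz:unabh}, which states that this joint distribution disintegrates as
\[
P_x(X_\tau-b\leq y,\tau=n) = \sum_{i\in E} H_{e_i}(y)\, P_x(\tau=n,G_i).
\]
Since $P_x(\tau=n,G_i)$ is a constant with respect to $y$, this is a finite positive linear combination of distribution functions, so by linearity of the integral
\[
E_x(\mathds{1}_{\{\tau=n\}} g(X_\tau)) = \sum_{i\in E} P_x(\tau=n,G_i) \int_{[0,\infty)} g(b+y)\, dH_{e_i}(y).
\]
By Proposition \ref{prop:PH_Vert}, $H_{e_i}$ is precisely the distribution function of the $PH(Q,e_i)$-distributed random variable $R^{i}$, so $\int g(b+y)\, dH_{e_i}(y) = E(g(b+R^{i}))$.

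Finally I would interchange the two sums (justified by Tonelli, since $g\geq 0$, or alternatively by absolute convergence when $g$ is bounded) to obtain
\[
E_x(\rho^\tau g(X_\tau)) = \sum_{i\in E} \Bigl(\sum_{n\in\N_0}\rho^n P_x(\tau=n,G_i)\Bigr) E(g(b+R^{i})) = \sum_{i\in E} E_x(\rho^\tau \mathds{1}_{G_i})\, E(g(b+R^{i})),
\]
which is the claim. There is no real obstacle here: the corollary is essentially a bookkeeping consequence of the conditional independence statement in Theorem \ref{satz:unabh}, with the only mild care needed being nonnegativity/integrability to legitimize the interchange of summations and the handling of $\{\tau=\infty\}$.
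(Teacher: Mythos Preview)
Your proposal is correct and is exactly the routine unpacking the paper has in mind: the paper gives no separate proof of the corollary, simply remarking that Theorem~\ref{satz:unabh} ``immediately implies'' it, and your argument is the standard way to spell out that implication. The only minor remark is that under the standing assumption $x<b$ the term $n=0$ vanishes, so your sum effectively runs over $n\in\N$ as in the theorem; otherwise nothing needs to be changed.
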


\section{Explicit representations of the joint distribution of threshold time and overshoot}\label{martingales}
Corollary \ref{koro:unabh} reduces the problem of finding expectations of the form $E_x(\rho^\tau g(X_\tau))$ to finding $\Phi_i^b(x)=\Phi_i(x)=E_x(\rho^\tau\mathds{1}_{G_i})$ for $\tau=\tau_b$ and $b>x$. The aim of this section is to construct martingales of the form $(\rho^{n\wedge \tau}h(X_{n\wedge\tau}))_{n\in\N}$ as a tool for the explicit representation of $\Phi_i^b(x)$. To this end some definitions are necessary:\\

We assume the setting of the previous section, i.e. we assume that the innovations can be written in the form 
\[Z_n=S_n-T_n\mbox{~~~~for all $n\in\N$,}\]
where $S_n$ and $T_n$ are non-negative and independent and $S_n$ is $PH(Q,\alpha)$-distributed.\\
Let $\exp(\psi)$ be the Laplace-transform of $Z_1$, i.e. $\psi(u)=\log E(e^{u Z_1})$ for all $u\in\C_+:=\{z\in\C:\Re(z)\geq 0\}$ with real part $\Re(u)$ so small that the expectation exists. Since $E(e^{u Z_1})=E(e^{uS_1})E(e^{-uT_1})$ and $T_1\geq0$ Proposition \ref{prop:PH_Vert} yields the existence of $\psi(u)$ for all $u$ with $\Re(u)$ smaller then the smallest eigenvalue of $-Q$. $\psi$ is analytic on this stripe and -- because of independence -- it holds that
\[\psi(u)=\psi_1(u)+\psi_2(u),\]
where $\exp(\psi_1)$ denotes the Laplace-transform of $S_1$ and $\exp(\psi_2)$ is the Laplace-transform of $-T_1$. $\psi_2$ is analytic on $\C_+$ and $\psi_1$ can be analytically extended to $\C_+\setminus Sp(-Q)$ by Proposition \ref{prop:PH_Vert}. Here $Sp({\cdot})$ denotes the spectrum, i.e. the set of all eigenvalues. Hence $\psi$ can be extended to $\C_+\setminus Sp(-Q)$ as well and this extension is again denoted by $\psi$. Note that this extension can not be interpreted from a probabilistic point of view because  $E(e^{u Z_1})$ does not exist for $u\in\C_+$ with too large real part.\\
To guarantee the convergence of $(X_n)_{n\in\N_0}$ we assume a weak integrability condition -- the well known Vervaat condition
\begin{equation}\label{eq:erwartungsert_endlich}
E(\log(1+|Z_1|))<\infty,
\end{equation}
see \cite[Theorem 2.1]{gm} for a characterization of such conditions in the theory of perpetuities. We do not go into details here, but just want to use the fact that $(X_n)_{n\in\N_0}$ converges to a (finite) random variable $\theta$ in distribution, that fulfills the stochastic fixed point equation
\[P^\theta=P^{\lambda\theta}\ast P^{Z_1},\]
where $\ast$ denotes convolution. Since the AR(1)-process has the representation
\[X_n=\lambda^nX_0+\sum_{k=0}^{n-1}\lambda^kZ_{n-k}\]
and convergence in distribution is equivalent to the pointwise convergence of the Laplace-transforms the Laplace-transform $\exp(\phi)$ of $\theta$ fulfills
\begin{equation}\label{eq:laplace-id2}
\phi(u)=\sum_{k=0}^\infty\psi(\lambda^k u)
\end{equation}
for all $u\in\C_+$ such that the Laplace-transform of $S_1$ exists.\\
The right hand side defines a holomorphic function on $\C_+\setminus\hat{P}$ that is also denoted by $\phi$, where we write $\hat{P}=\bigcup_{n\in\N_0}Sp(-\lambda^{-n} Q)$. For the convergence of the series note that -- as described above -- it converges for all $u\in\C_+$ such that $E(e^{u Z_1})<\infty$. For all other $u\in\C_+$ the series also converges since there exists $k_0$ such that $E(e^{\lambda^ku Z_1})<\infty$ for all $k\geq k_0$.\\
Furthermore the identity 
\begin{equation}\label{eq:laplace-id}
\phi(u)=\phi(\lambda u)+\psi(u)
\end{equation}
holds, whenever $u,\lambda u$ are in the domain of $\phi$. To avoid problems concerning the applicability of (\ref{eq:laplace-id}) we assume that 
\begin{equation}\label{eq:eigenwert}
Sp(\lambda^n Q)\cap Sp(Q)=\emptyset\mbox{~~~for all $n\in \N$.}
\end{equation}
We would like to mention, that the function $\phi$ was used and studied in \cite{NK} as well.\\

The next two lemmas are helpful in the construction of the martingales.

\begin{lem}\label{harm1}
Let $\delta\in\C_+$ such that $E(e^{\delta S_1})$ exists. Then for all $x< b$ it holds that 
\[\rho E(e^{\delta(\lambda x+Z_1)}\mathds{1}_{\{\lambda x+Z_1\geq b\}})=\alpha_\delta e^{-\lambda xQ}q,\]
where $\alpha_\delta=\rho\alpha (-\delta I-Q)^{-1}e^{(\delta I+Q)b+\psi_2(-Q)}$.
\end{lem}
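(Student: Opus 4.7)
The approach is to factor $Z_1 = S_1 - T_1$, carry out the $S_1$-integral against the phasetype density by hand, collect the scalar $\delta$-dependent exponentials into a single factor $e^{\delta b}$, and then identify the resulting $T_1$-average as $e^{\psi_2(-Q)}$ via functional calculus.

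First, by independence of $S_1$ and $T_1$, I would condition on $T_1 = t$; the event $\{\lambda x + Z_1 \geq b\}$ becomes $\{S_1 \geq c(t)\}$ with $c(t) := b - \lambda x + t \geq 0$. Using the density $h_\alpha(s) = \alpha e^{Qs} q$ from Proposition \ref{prop:PH_Vert}(c), and noting that $E(e^{\delta S_1}) < \infty$ forces every eigenvalue of $\delta I + Q$ to have negative real part, the tail integral is absolutely convergent and evaluates to
\[
E\!\left(e^{\delta S_1}\mathds{1}_{\{S_1 \geq c\}}\right) = \alpha \int_c^\infty e^{(\delta I + Q)s}\,ds\, q = \alpha(-\delta I - Q)^{-1} e^{(\delta I + Q)c}\, q.
\]
Plugging this into $\rho e^{\delta \lambda x}\int e^{-\delta t} E(e^{\delta S_1}\mathds{1}_{\{S_1 \geq c(t)\}})\, P^{T_1}(dt)$ and splitting $e^{(\delta I + Q)(b - \lambda x + t)} = e^{\delta(b - \lambda x + t)} e^{Q(b - \lambda x + t)}$ (the scalar and matrix parts commute), the scalar prefactor $e^{\delta \lambda x} e^{-\delta t} e^{\delta(b - \lambda x + t)} = e^{\delta b}$ becomes independent of $t$, leaving
\[
\rho\, \alpha(-\delta I - Q)^{-1} e^{\delta b}\, e^{Qb}\, e^{-\lambda x Q}\! \int\! e^{Qt}\, P^{T_1}(dt)\, q,
\]
where the $t$-integral has been pulled past $e^{-\lambda x Q}$ and $e^{Qb}$ since all three are functions of $Q$ and therefore mutually commute.

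The remaining step is the identification $\int e^{Qt}\, P^{T_1}(dt) = E(e^{QT_1}) = e^{\psi_2(-Q)}$. Since $\psi_2(u) = \log E(e^{-uT_1})$ is analytic on $\C_+$ and $Sp(-Q) \subset \C_+$, functional calculus applied componentwise on each Jordan block of $Q$ (or, equivalently, a Cauchy integral on a contour in $\C_+$ enclosing $Sp(-Q)$) yields this identity, since for each eigenvalue $\mu$ of $-Q$ one has $e^{\psi_2(\mu)} = E(e^{-\mu T_1})$ scalarly. Combining $e^{\delta b} e^{Qb} = e^{(\delta I + Q)b}$ and absorbing $e^{\psi_2(-Q)}$ into the same matrix exponent (all factors are functions of $Q$) produces
\[
\rho \alpha (-\delta I - Q)^{-1} e^{(\delta I + Q)b + \psi_2(-Q)} e^{-\lambda x Q} q = \alpha_\delta e^{-\lambda x Q} q,
\]
as claimed. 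The only genuine subtlety is the functional-calculus identification $E(e^{QT_1}) = e^{\psi_2(-Q)}$; everything else amounts to careful bookkeeping of commuting matrix exponentials and the deliberate cancellation of the scalar $\delta$-terms.
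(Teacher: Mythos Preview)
Your proof is correct and follows essentially the same route as the paper's: condition on $T_1=t$, evaluate the $S_1$-tail integral against the phasetype density using that $\delta I+Q$ has eigenvalues with negative real part, exploit that all matrices involved are functions of $Q$ and hence commute, and identify $\int e^{Qt}\,P^{T_1}(dt)$ with $e^{\psi_2(-Q)}$. The only difference is cosmetic: you split off the scalar factor $e^{\delta c(t)}$ from $e^{(\delta I+Q)c(t)}$ early and track its cancellation to $e^{\delta b}$ explicitly, whereas the paper carries the full matrix exponential through and simplifies at the end; and you spell out the functional-calculus justification for $E(e^{QT_1})=e^{\psi_2(-Q)}$, which the paper simply writes down.
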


\begin{proof}
In the following calculation we use the fact that all matrices commutate and that all eigenvalues of $\delta I+Q$ have negative real part. It holds
\begin{align*}
E(e^{\delta(\lambda x+Z_1)}\mathds{1}_{\{\lambda x+S_1\geq b\}})&=e^{\delta\lambda x}\int_0^\infty E(e^{\delta(S-t)}\mathds{1}_{\{\lambda x+S_1-t\geq b\}})P^T(dt)\\
&=e^{\delta\lambda x}\int_0^\infty\int_{b+t-\lambda x}^\infty e^{\delta(s-t)}\alpha e^{Qs}qds P^T(dt)\\
&=e^{\delta\lambda x}\int_0^\infty e^{-\delta t}\alpha \int_{b+t-\lambda x}^\infty e^{(\delta I+Q)s}dsqP^T(dt)\\
&=-e^{\delta\lambda x}\int_0^\infty e^{-\delta t}\alpha (\delta I+Q)^{-1}e^{(\delta I+Q)(b+t-\lambda x)}qP^T(dt).\\
\end{align*}
We obtain
\begin{align*}
\rho E(e^{\delta(\lambda x+Z_1)}\mathds{1}_{\{\lambda x+Z_1\geq b\}})&=-\rho e^{\delta\lambda x}\alpha \int_0^\infty e^{Q t} P^T(dt) (\delta I+Q)^{-1}e^{(\delta I+Q)(b-\lambda x)}q\\
&=-\rho\alpha\int_0^\infty e^{-Q s} P^{-T}(ds)(\delta I+Q)^{-1}e^{-Q\lambda x} e^{(\delta I+Q)b}q\\
&=\alpha_\delta e^{-Q\lambda x}q.
\end{align*}
\end{proof}

We write $Q_\gamma=-\gamma Q $ for short. For all $\gamma\in\C_+$ fulfilling 
\begin{equation}\label{eq:gamma}
Sp(\lambda^nQ_\gamma)\cap \hat{P}=\emptyset\mbox{~~~for all $n\in\N$}
\end{equation}
we define the function
\[f_\gamma:\R\rightarrow\C^{m\times m}, x\mapsto \sum_{n\in\N}e^{x\lambda^n Q_\gamma-\phi(\lambda^nQ_\gamma)}\rho^{n-1}.\]
This series converges because $e^{x\lambda^n Q_\gamma-\phi(\lambda^nQ_\gamma)}$ is bounded in $n$. Note that the summand of this series is similar to the integrand in (\ref{eq:int_novikov}).

\begin{lem}\label{harm2}
There exists $\delta>0$ such that for all $x\in\R$ and $\gamma\in \C_+$ with $|\gamma|<\delta$ it holds that
\[\rho E({f}_\gamma(\lambda x+Z_1))={f}_\gamma(x)-e^{\lambda x Q_\gamma -\phi(\lambda Q_\gamma)}.\]
\end{lem}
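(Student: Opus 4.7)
The plan is to expand $f_\gamma(\lambda x+Z_1)$ via its defining series, interchange expectation and summation, apply the matrix-valued Laplace transform identity $E(e^{Z_1 A}) = e^{\psi(A)}$ for $A = \lambda^n Q_\gamma$, use the functional equation (\ref{eq:laplace-id}) for $\phi$ (lifted to matrix arguments), and finally reindex the series.

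Granting the interchange, the $n$-th term equals
\[\rho^n\, e^{\lambda^{n+1} x Q_\gamma}\, E(e^{Z_1 \lambda^n Q_\gamma})\, e^{-\phi(\lambda^n Q_\gamma)}.\]
Since all the matrices $\lambda^n Q_\gamma$ are scalar multiples of $Q$, they commute pairwise and share a common (Jordan) decomposition, so $\psi$ and $\phi$ act on them coherently through the holomorphic functional calculus. For $|\gamma|$ small, the spectrum of each $\lambda^n Q_\gamma$ sits inside the strip of analyticity of $\psi$ and, by condition (\ref{eq:gamma}), inside the domain $\C_+\setminus\hat P$ of $\phi$, so $E(e^{Z_1 \lambda^n Q_\gamma}) = e^{\psi(\lambda^n Q_\gamma)}$. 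Identity (\ref{eq:laplace-id}), read at the matrix argument $\lambda^n Q_\gamma$, then yields $\psi(\lambda^n Q_\gamma)-\phi(\lambda^n Q_\gamma) = -\phi(\lambda^{n+1} Q_\gamma)$. Substituting and reindexing with $m = n+1$ gives
\[\rho E(f_\gamma(\lambda x + Z_1)) = \sum_{m\geq 2}\rho^{m-1}\, e^{\lambda^m x Q_\gamma - \phi(\lambda^m Q_\gamma)} = f_\gamma(x) - e^{\lambda x Q_\gamma - \phi(\lambda Q_\gamma)},\]
which is the claimed identity.

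The main obstacle is justifying the interchange of expectation and summation. For this I would exhibit an $n$-uniform integrable majorant. Since the eigenvalues of $\lambda^n Q_\gamma$ are of the form $-\lambda^n \gamma \mu$ with $\mu\in \mathrm{Sp}(Q)$ of negative real part, the norm $\|e^{Z_1\lambda^n Q_\gamma}\|$ can only grow on $\{Z_1\geq 0\}$, where $Z_1\leq S_1$; exploiting the phasetype structure of $S_1$ (whose moment generating function is finite in a neighbourhood of $0$) yields a bound $\|e^{Z_1\lambda^n Q_\gamma}\|\leq C(1+e^{c S_1})$ that holds uniformly in $n$ provided $|\gamma|$ is sufficiently small. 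Together with the boundedness of $\|e^{-\phi(\lambda^n Q_\gamma)}\|$ (which follows from $\phi(0)=0$ and $\lambda^n Q_\gamma\to 0$) and the geometric factor $\rho^{n-1}$, one obtains a majorant that is both $\omega$-integrable and summable in $n$, and dominated convergence legitimates the computation. The choice of $\delta$ in the statement is the one making all of the above work simultaneously.
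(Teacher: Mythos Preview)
Your proof is correct and follows essentially the same route as the paper: expand the series, pull the expectation inside, use $E(e^{\lambda^n Q_\gamma Z_1})=e^{\psi(\lambda^n Q_\gamma)}$, apply the functional equation (\ref{eq:laplace-id}) at matrix arguments, and reindex. The paper's version is terser---it simply notes that $E(e^{Q_\gamma\lambda^n Z_1})$ exists for small $|\gamma|$ and performs the computation---whereas you supply a dominated-convergence majorant; your claim that $\|e^{Z_1\lambda^n Q_\gamma}\|$ grows only on $\{Z_1\geq 0\}$ is clean when $\gamma$ is real (or $Q$ has real spectrum) but needs a word of care for general complex $\gamma$, though this does not affect the argument's substance.
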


\begin{proof}
For all $\gamma\in\C_+$ with $|\gamma|$ sufficiently small the expected value $E(e^{Q_\gamma\lambda^n Z_1})$ exists for all $n\in\N$ since $Q$ has (finitely many) negative eigenvalues. This leads to
\begin{align*}
E({f}_\gamma(\lambda x+Z_1))&=E(\sum_{n\in\N}e^{(\lambda x+Z_1)\lambda^n Q_\gamma-\phi(\lambda^n Q_\gamma)}\rho^{n-1})\\
&=\sum_{n\in\N}e^{(\lambda x)\lambda^n Q_\gamma-\phi(\lambda^n Q_\gamma)}E(e^{\lambda^n Q_\gamma Z_1})\rho^{n-1}\\
&=\sum_{n\in\N}e^{x\lambda^{n+1} Q_\gamma-\phi(\lambda^n Q_\gamma)+\psi(\lambda^n Q_\gamma)}\rho^{n-1}\\
&\stackrel{(\ref{eq:laplace-id})}{=}\frac{1}{\rho} \sum_{n\in\N}e^{x\lambda^{n+1} Q_\gamma-\phi(\lambda^{n+1} Q_\gamma)}\rho^{n}\\
&=\frac{1}{\rho} ({f}_\gamma(x)-e^{\lambda Q_\gamma x-\phi(\lambda Q_\gamma)}).
\end{align*}
\end{proof}

The next step is to find a family of equations characterizing
\[\Phi(x)=(\Phi_1(x),...,\Phi_m(x))=(E_x(\rho^\tau\mathds{1}_{G_1}),...,E_x(\rho^\tau\mathds{1}_{G_m}))\]
using martingale techniques where $\tau=\tau_b$, $x<b$. To this end we consider 
\[h_{\gamma,\delta}:\R\rightarrow \C, x\mapsto e^{\delta x}\mathds{1}_{\{x\geq b\}}+\beta_{\gamma,\delta} f_{\gamma}(x)q,\]
for all $\delta\in\C_+\setminus SP(-Q)$ and $\gamma$ fulfilling (\ref{eq:gamma}) where $\beta_{\gamma,\delta}=\alpha_\delta {e^{\phi(\lambda Q_\gamma)}}$. For the special value $\gamma=1$ we write $h_\delta=h_{\gamma,\delta}$ and this function is well-defined by (\ref{eq:eigenwert}).\\

Putting together the results of Lemma \ref{harm1} and Lemma \ref{harm2} we obtain the equation
\begin{equation}\label{harm3}
\rho E_x(h_{\gamma,\delta}(X_1))-h_{\gamma,\delta}(x)=\alpha_\delta e^{-\lambda xQ}q-\alpha_\delta e^{-\lambda \gamma xQ}q \mbox{~~~for all $x< b$}
\end{equation}
for all $\gamma, \delta\in\C_+$ with sufficiently small modulus.\\

Before stating the equations we need one more technical result.

\begin{lem}\label{lem:eta}
Let $R^i$ be a $PH(Q,e_i)$-distributed random variable and denote by $\psi^i(\cdot)=\log(e_i(-\cdot I-Q)^{-1}q)$ the holomorphic extension of the logarithmized Laplace-transform of $R_i$. Here $e_i$ denotes the $i$-th unit vector. Let $|\gamma|,|\delta|$ be so small that $E(h_{\gamma,\delta}(b+R^i))$ exists. Then it holds that
\[E(h_{\gamma,\delta}(b+R^i))=e^{\delta b} \alpha_{\gamma,i} (-\delta I-Q)^{-1}q=:\eta_{\gamma,\delta,i} ,\]
where
\[\alpha_{\gamma,i}=e_i+\alpha  e^{Qb+\psi_2(-Q)+\phi(\lambda Q_\gamma)}\sum_{n\in\N}e^{b\lambda^nQ_\gamma-\phi(\lambda^nQ_\gamma)+\psi^i(\lambda^nQ_\gamma)}\rho^{n}.\]
\end{lem}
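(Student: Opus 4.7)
The plan is to split $h_{\gamma,\delta}(b+R^i)$ into its two defining pieces and compute the expectation of each separately. Because $R^i\geq 0$, the indicator $\mathds{1}_{\{b+R^i\geq b\}}$ is almost surely equal to $1$, so the first piece contributes
\[
e^{\delta b}E(e^{\delta R^i})=e^{\delta b}e_i(-\delta I-Q)^{-1}q
\]
by Proposition \ref{prop:PH_Vert}(d); this already accounts for the $e_i$-term in $\alpha_{\gamma,i}$.

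For the second piece I would interchange expectation and summation (justified for $|\gamma|,|\delta|$ sufficiently small by boundedness of the matrix exponentials in $n$ together with dominated convergence) to obtain
\[
E(f_\gamma(b+R^i))=\sum_{n\in\N}e^{b\lambda^n Q_\gamma-\phi(\lambda^n Q_\gamma)}E(e^{\lambda^n Q_\gamma R^i})\rho^{n-1}.
\]
The decisive identity is $E(e^{\lambda^n Q_\gamma R^i})=e^{\psi^i(\lambda^n Q_\gamma)}$, obtained by applying the holomorphic functional calculus to the scalar formula $E(e^{sR^i})=e^{\psi^i(s)}$ of Proposition \ref{prop:PH_Vert}(d); this is legitimate because $\lambda^n Q_\gamma=-\lambda^n\gamma Q$ commutes with $Q$ and all its eigenvalues lie in the domain of analyticity of $\psi^i$ for $|\gamma|$ sufficiently small. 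Since the matrices $b\lambda^n Q_\gamma$, $-\phi(\lambda^n Q_\gamma)$ and $\psi^i(\lambda^n Q_\gamma)$ are all functions of $Q$, they commute, and their exponentials may be merged into a single exponential of the sum.

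In the final step I would substitute $\beta_{\gamma,\delta}=\alpha_\delta e^{\phi(\lambda Q_\gamma)}$ together with $\alpha_\delta=\rho\alpha(-\delta I-Q)^{-1}e^{(\delta I+Q)b+\psi_2(-Q)}$, split $e^{(\delta I+Q)b}=e^{\delta b}e^{Qb}$, and use commutativity of all matrices built from $Q$ to move $(-\delta I-Q)^{-1}q$ to the right of the whole expression. Absorbing the prefactor $\rho$ into the series by the index shift $\rho^{n-1}\mapsto\rho^n$ leaves precisely the second summand of $\alpha_{\gamma,i}$, completing the identification. The main obstacle is bookkeeping: one has to verify carefully that all the occurring matrices are functions of $Q$ alone (so that they mutually commute), and that the functional-calculus identity for $E(e^{\lambda^n Q_\gamma R^i})$ is valid uniformly in $n$ for all $\gamma,\delta$ with sufficiently small modulus.
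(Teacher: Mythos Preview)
Your proposal is correct and is precisely the ``simple calculus similar to the previous ones'' that the paper invokes without spelling out: split $h_{\gamma,\delta}$ into its two summands, use Proposition~\ref{prop:PH_Vert}(d) for the indicator part, interchange sum and expectation in $f_\gamma$, apply the functional-calculus identity $E(e^{\lambda^n Q_\gamma R^i})=e^{\psi^i(\lambda^n Q_\gamma)}$ (valid since $\lambda^n Q_\gamma$ is a scalar multiple of $Q$), and then reshuffle the commuting matrix factors to pull $(-\delta I-Q)^{-1}q$ to the right. The bookkeeping you flag---commutativity of all $Q$-functions and uniform validity of the functional calculus for small $|\gamma|$---is exactly what is needed, and nothing is missing.
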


\begin{proof}
Simple calculus similar to the previous ones yields the result.
\end{proof}

\begin{satz}\label{gleichung}
For all $x<b$ and $\delta\in\C_+\setminus Sp(-Q)$ it holds that
\[\sum_{i=1}^m \eta_{\delta,i} \Phi_i(x)=h_\delta(x),\]
where $\eta_{\delta,i}=\eta_{1,\delta,i}$ is given in the previous Lemma.
\end{satz}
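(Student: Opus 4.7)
The strategy is to turn equation (\ref{harm3}) into a martingale identity at the distinguished value $\gamma=1$, to apply optional sampling at $\tau=\tau_b$, and finally to evaluate the resulting expectation with the help of Corollary \ref{koro:unabh} and Lemma \ref{lem:eta}.

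First I would specialize (\ref{harm3}) to $\gamma=1$. Since $Q_\gamma=-\gamma Q$, setting $\gamma=1$ makes the two terms on the right-hand side of (\ref{harm3}) cancel, so that
\[\rho E_x(h_\delta(X_1))-h_\delta(x)=0\qquad\text{for all } x<b.\]
Because $X_n<b$ on $\{\tau>n\}$, this shows that the stopped process $(M_n)_{n\in\N_0}$ defined by $M_n=\rho^{n\wedge\tau}h_\delta(X_{n\wedge\tau})$ is a martingale under each $P_x$ with $x<b$. The argument is done first for $\delta,|\delta|$ small enough so that the defining series and integrals in $h_\delta$, $\alpha_\delta$ and Lemma \ref{harm2} all converge absolutely.

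Next I would apply optional sampling. By construction $|h_\delta(X_{n\wedge\tau})|$ is dominated by an exponential in $X_{n\wedge\tau}$ on $\{\tau\le n\}$ plus a bounded contribution from the $f_\gamma$-term, so that a standard dominated-convergence/uniform-integrability argument (using $\rho<1$ together with $E_x(\rho^\tau e^{\delta X_\tau})<\infty$, which follows from Corollary \ref{koro:unabh} applied to $g(x)=e^{\delta x}$) gives
\[h_\delta(x)=E_x\bigl(\rho^\tau h_\delta(X_\tau)\bigr)=E_x\bigl(\rho^\tau e^{\delta X_\tau}\mathds{1}_{\{\tau<\infty\}}\bigr)+\beta_{1,\delta} E_x\bigl(\rho^\tau f_1(X_\tau)q\,\mathds{1}_{\{\tau<\infty\}}\bigr),\]
where on $\{\tau<\infty\}$ the indicator $\mathds{1}_{\{X_\tau\ge b\}}$ is automatically $1$, and on $\{\tau=\infty\}$ the convention $\rho^\tau=0$ is used.

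Now I would evaluate the right-hand side using Corollary \ref{koro:unabh}, which applies to every non-negative (and, by decomposition into real and imaginary parts, every integrable complex-valued) measurable $g$: with $g(y)=e^{\delta y}$ and $g(y)=f_1(y)q$ respectively one obtains
\[E_x(\rho^\tau e^{\delta X_\tau})=\sum_{i=1}^m\Phi_i(x)\,E\bigl(e^{\delta(b+R^i)}\bigr),\qquad E_x\bigl(\rho^\tau f_1(X_\tau)q\bigr)=\sum_{i=1}^m\Phi_i(x)\,E\bigl(f_1(b+R^i)q\bigr).\]
Adding these two identities and recognising that $E\bigl(e^{\delta(b+R^i)}\bigr)+\beta_{1,\delta}E\bigl(f_1(b+R^i)q\bigr)=E\bigl(h_\delta(b+R^i)\bigr)=\eta_{\delta,i}$ by Lemma \ref{lem:eta} yields the claimed identity $\sum_{i=1}^m\eta_{\delta,i}\Phi_i(x)=h_\delta(x)$. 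The extension from small $|\delta|$ to all $\delta\in\C_+\setminus Sp(-Q)$ follows because both sides are holomorphic in $\delta$ on this set (the poles of $\alpha_\delta$, $h_\delta$ and $\eta_{\delta,i}$ are exactly the points of $Sp(-Q)$) and agree on an open neighbourhood of $0$, so analytic continuation applies.

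The main obstacle I expect is the optional-sampling step: one has to justify that the martingale $M_n$ converges in $L^1(P_x)$ (or at least that $E_x(M_n)\to E_x(\rho^\tau h_\delta(X_\tau))$). The cancellation in (\ref{harm3}) only guarantees the local martingale property, so the estimates that control the size of $f_\gamma$ and $e^{\delta X_{n\wedge\tau}}$ on $\{\tau>n\}$, together with the geometric factor $\rho^n$, have to be made carefully; restricting to $\delta$ in a small neighbourhood of $0$ and relying on analytic continuation afterwards is the cleanest way to avoid integrability headaches.
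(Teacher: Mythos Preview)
Your plan hinges on specializing (\ref{harm3}) directly to $\gamma=1$ so that the right-hand side vanishes and $(\rho^{n\wedge\tau}h_\delta(X_{n\wedge\tau}))$ becomes an honest martingale. But (\ref{harm3}) is only established for $|\gamma|$ small, and this is not a cosmetic restriction: the computation behind Lemma~\ref{harm2} needs $E(e^{\lambda^n Q_\gamma Z_1})$ to exist for every $n\geq 1$. At $\gamma=1$ this means $E(e^{-\lambda^n Q S_1})<\infty$, which requires each eigenvalue of $-\lambda Q$ to have real part strictly below the abscissa of convergence of the phase-type Laplace transform, i.e.\ $\lambda\,\max_j\Re(\mu_j)<\min_j\Re(\mu_j)$ for the eigenvalues $\mu_j$ of $-Q$. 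When the spectrum of $Q$ is spread out and $\lambda$ is close to $1$ this fails, $E(f_1(\lambda x+Z_1))$ diverges, and your ``martingale'' identity $\rho E_x(h_\delta(X_1))=h_\delta(x)$ is not available. Restricting $\delta$ does not help; the obstruction is in $\gamma$.

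The paper's proof is designed precisely to sidestep this. It keeps $\gamma$ small so that all expectations exist, carries the non-vanishing drift term $\alpha_\delta e^{-\lambda X_iQ}q-\alpha_\delta e^{-\lambda\gamma X_iQ}q$ through the optional-sampling and dominated-convergence steps (where it is harmless because $e^{-sQ}$ stays bounded for $\Re(s)$ bounded above and $X_i<b$ on $\{i<\tau\}$), and only \emph{after} all integrals have been taken does it analytically continue in $\gamma$ to reach $\gamma=1$, at which point the drift term cancels. So the analytic continuation in $\gamma$ is not a technicality you can drop; it is what lets one bypass the non-integrability of $f_1(X_1)$. Your final analytic continuation in $\delta$ is correct and matches the paper, but you need an analogous two-parameter argument in $\gamma$ to close the gap.
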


\begin{proof}
Write $h:=h_{\gamma,\delta}$ for $\delta, \gamma\in\C_+$ with $|\delta|,|\gamma|$ so small that $E(h(Z_1))<\infty$.\\
The discrete version of Itô's formula yields
\begin{align*}
&\rho^nh(X_n)-\sum_{i=0}^{n-1}\rho^i(\rho E_x(h(X_{i+1})|X_i)-h(X_i))\\
=&h(X_0)+\sum_{i=0}^{n-1}\rho^{i+1}(h(X_{i+1})-E_x(h(X_{i+1})|X_i))=:M_n
\end{align*}
and $(M_n)_{n\in\N}$ is a martingale. The optional sampling theorem applied to $\tau=\tau_b$ yields
\begin{align*}
h(x)&=E_x(M_{\tau\wedge n})=E_x(\rho^{n\wedge\tau}h(X_{n\wedge\tau}))-E_x\left(\sum_{i=0}^{{n\wedge\tau}-1}\rho^i(\rho E_x(h(X_{i+1})|X_i)-h(X_i))\right)\\
&=E_x(\rho^{n\wedge\tau}h(X_{n\wedge\tau}))-E_x\left(\sum_{i=0}^{{n\wedge\tau}-1}\rho^i(\alpha_\delta e^{-\lambda X_i Q}q-\alpha_\delta e^{-\lambda \gamma X_i Q}q )\right)
\end{align*}
using equality (\ref{harm3}). The dominated convergence theorem shows that
\[h_{\delta,\gamma}(x)=E_x(\rho^{\tau}h_{\delta,\gamma}(X_{\tau}))-E_x\left(\sum_{i=0}^{{\tau}-1}\rho^i(\alpha_\delta e^{-\lambda X_i Q}q-\alpha_\delta e^{-\lambda \gamma X_i Q}q )\right);\]
note that the dominated convergence theorem is applicable to both summands since $Q$ has negative eigenvalues and so $ e^{-sQ}$ is bounded in $s$ for $s$ with $\Re(s)$ being bounded above. Corollary \ref{koro:unabh} leads to
\[h_{\gamma,\delta}(x)=\sum_{i=1}^m E_x(\rho^\tau\mathds{1}_{G_i}) E(h_{\gamma,\delta}(R_i+b))-E_x\left(\sum_{i=0}^{{\tau}-1}\rho^i(\alpha_\delta e^{-\lambda X_i Q}q-\alpha_\delta e^{-\lambda \gamma X_i Q}q )\right),\]
where $R_i$ is $PH(Q,e_i)-$distributed and the previous Lemma implies
\[E(h_{\gamma,\delta}(R_i+b))=e^{\delta b} \alpha_{\gamma,i} (-\delta I-Q)^{-1}q.\]
Since $\sum_{i=0}^{{\tau}-1}\rho^i(\alpha_\delta e^{-\lambda X_i Q}q-\alpha_\delta e^{-\lambda \gamma X_i Q}q )$ is bounded both sides of the equation
\[h_{\delta,\gamma}(x)=\sum_{i=1}^m \eta_{\gamma,\delta,i} \Phi_i(x) -E_x\left(\sum_{i=0}^{{\tau}-1}\rho^i(\alpha_\delta e^{-\lambda X_i Q}q-\alpha_\delta e^{-\lambda \gamma X_i Q}q )\right)\]
are holomorphic in $\{\gamma\in\C_+:\hat{P}\cap Sp(-\gamma\lambda^n Q)=\emptyset\mbox{~for all $n\in\N$}\}$ and the identity theorem for holomorphic functions yields that these extensions agree on their domains. Keeping (\ref{eq:eigenwert}) in mind we especially obtain for $\gamma=1$
\[h_\delta(x)=\sum_{i=1}^m \eta_{\delta,i} \Phi_i(x).\]
Furthermore both sides of the equations are again holomorphic functions in $\delta$ on\linebreak $\C_+\setminus Sp(-Q)$. Another application of the identity theorem proves the assertion.
\end{proof}

The equation in the theorem above appears useful and flexible enough for the explicit solution as shown in the next subsections.

\subsection{The case of exponential positive innovations}
As described above the case of $\Exp(\mu)$-distributed positive innovations is of special interest. In this case we obtain the solution directly from the results above. Hence let $m=1$, $\alpha=1$, $Q=-\mu$ and $q=\mu$. It is not relevant which $\delta$ we take; because the expressions simplify a bit we choose $\delta=0$. Then we obtain
\begin{align*}
\eta_{0,1}&=1+e^{\psi_2(\mu)-\mu b+\phi(\lambda \mu)}\sum_{n\in\N}e^{\lambda^n\mu b-\phi(\lambda^n \mu)+\psi_1(\lambda^n \mu)}\rho^{n}\\
&\stackrel{(\ref{eq:laplace-id})}{=}1+e^{\psi_2(\mu)-\mu b+\phi(\lambda \mu)}\sum_{n\in\N}e^{\lambda^n\mu b-\phi(\lambda^{n+1} \mu)-\psi_2(\lambda^{n} \mu)}\rho^{n}\\
&=e^{\psi_2(\mu)-\mu b+\phi(\lambda \mu)}\sum_{n\in\N_0}e^{\lambda^n\mu b-\phi(\lambda^{n+1} \mu)-\psi_2(\lambda^{n} \mu)}\rho^{n}
\end{align*}
and
\[h_0(x)=e^{\psi_2(\mu)-\mu b+\phi(\lambda \mu)}\sum_{n\in\N}e^{\lambda^n\mu x-\phi(\lambda^n \mu)}\rho^{n}\]
for $x<b$. Theorem \ref{gleichung} yields

\begin{satz}
\begin{align*}
E_x(\rho^\tau)=\frac{h_0(x)}{\eta_{0,1}}=\frac{\sum_{n\in\N}e^{\lambda^n\mu x-\phi(\lambda^n \mu)}\rho^n}{\sum_{n\in\N_0}e^{\lambda^n\mu b-\phi(\lambda^{n+1} \mu)-\psi_2(\lambda^n \mu)}\rho^n}\mbox{~~~for all $x<b$.}
\end{align*}
\end{satz}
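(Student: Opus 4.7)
The plan is to specialize Theorem~\ref{gleichung} to the one-phase case ($m=1$, $\alpha=1$, $Q=-\mu$, $q=\mu$) at the parameter $\delta=0$. Under this specialization the embedded chain $(J_t)_{t\geq 0}$ is identically equal to the single state $1$, so $G_1=\{J_{\hat\tau}=1\}$ has full probability and $\Phi_1(x)=E_x(\rho^\tau\mathds{1}_{G_1})=E_x(\rho^\tau)$. Since $\delta=0$ lies in $\C_+\setminus Sp(-Q)=\C_+\setminus\{\mu\}$, Theorem~\ref{gleichung} collapses to the scalar identity $\eta_{0,1}\,E_x(\rho^\tau)=h_0(x)$, and it only remains to rewrite both sides in closed form.

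For $h_0(x)$ I would substitute $\alpha_0=\rho\mu^{-1}e^{-\mu b+\psi_2(\mu)}$, $\beta_{1,0}=\alpha_0 e^{\phi(\lambda\mu)}$ and the series for $f_1$ into the definition of $h_{1,0}$, noting that the indicator $\mathds{1}_{\{x\geq b\}}$ drops because $x<b$. All quantities are now scalars, so matrix exponentials reduce to ordinary exponentials and one reads off
$$h_0(x)=e^{\psi_2(\mu)-\mu b+\phi(\lambda\mu)}\sum_{n\in\N}e^{\lambda^n\mu x-\phi(\lambda^n\mu)}\rho^n,$$
as already recorded in the paragraph preceding the theorem. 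For $\eta_{0,1}$ I would note that $R^1$ is $\mathrm{Exp}(\mu)$-distributed, whence $\psi^1=\psi_1$, and Lemma~\ref{lem:eta} with $\gamma=1$, $\delta=0$, $i=1$ gives $\eta_{0,1}=\alpha_{1,1}$ (the factor $(-\delta I-Q)^{-1}q$ reduces to $\mu^{-1}\mu=1$). The functional identity~(\ref{eq:laplace-id}), rewritten as $\psi_1(u)-\phi(u)=-\phi(\lambda u)-\psi_2(u)$, transforms each exponent inside $\alpha_{1,1}$ into $\lambda^n\mu b-\phi(\lambda^{n+1}\mu)-\psi_2(\lambda^n\mu)$, and the free constant $1$ from $\alpha_{1,1}$ becomes precisely the $n=0$ term of the resulting series, producing
$$\eta_{0,1}=e^{\psi_2(\mu)-\mu b+\phi(\lambda\mu)}\sum_{n\in\N_0}e^{\lambda^n\mu b-\phi(\lambda^{n+1}\mu)-\psi_2(\lambda^n\mu)}\rho^n.$$
Dividing the two displays cancels the common prefactor and yields the asserted formula.

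The only non-routine step is this final re-indexing: one must recognise that after applying~(\ref{eq:laplace-id}) the isolated $1$ in $\alpha_{1,1}$ fills exactly the $n=0$ slot that is otherwise missing in the denominator, allowing the two series to be paired under a single prefactor. Applicability of~(\ref{eq:laplace-id}) at every argument $\lambda^n\mu$ is ensured by assumption~(\ref{eq:eigenwert}), which in this scalar setting reduces to $\lambda^n\mu\neq\mu$ for $n\geq 1$ and holds because $\lambda<1$; everything else is one-dimensional bookkeeping.
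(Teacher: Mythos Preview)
Your proposal is correct and follows essentially the same route as the paper: specialize Theorem~\ref{gleichung} at $m=1$, $Q=-\mu$, $\delta=0$, compute $h_0(x)$ and $\eta_{0,1}$ from their definitions and Lemma~\ref{lem:eta}, use identity~(\ref{eq:laplace-id}) to rewrite $-\phi(\lambda^n\mu)+\psi_1(\lambda^n\mu)$ as $-\phi(\lambda^{n+1}\mu)-\psi_2(\lambda^n\mu)$, and absorb the free constant~$1$ as the $n=0$ term before dividing. The paper carries out exactly these computations in the paragraph immediately preceding the theorem.
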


In \cite{CIN} the special case of positive exponential distributed innovations was treated by finding and solving ordinary differential equations for $E_x(\rho^\tau)$. For this case -- i.e. $T_1=0$ -- we obtain
\[E_x(\rho^\tau)=\frac{\sum_{n\in\N}e^{\lambda^n\mu x-\phi(\lambda^n \mu)}\rho^n}{\sum_{n\in\N_0}e^{\lambda^n\mu b-\phi(\lambda^{n+1} \mu)}\rho^n}.\]
To get more explicit results we need a simple expression for $\phi(\lambda^n \mu)$. Using identity (\ref{eq:laplace-id2}) we find such an expression as \begin{align*}
e^{\phi(\lambda^n\mu)}&=\prod_{k=0}^\infty e^{\psi_1(\lambda^{n+k}\mu)}=\prod_{k=0}^\infty\frac{\mu}{\mu-\lambda^{n+k}\mu}=\prod_{k=0}^\infty\frac{1}{1-\lambda^{n+k}}\\
&=\frac{\prod_{k=1}^{n-1}(1-\lambda^k)}{\prod_{k=1}^{\infty}(1-\lambda^k)}=\frac{(\lambda,\lambda)_{n-1}}{\phi_e(\lambda)},
\end{align*}
where $(a,q)_n=\prod_{k=1}^{n-1}(1-aq^{k-1})$ denotes the $q$-Pochhammer-symbol and $\phi_e(q)=(q,q)_\infty$ denotes the Euler function. This leads to \[E_x(\rho^\tau)=\frac{\sum_{n\in\N}\frac{\rho^n}{(\lambda,\lambda)_{n-1}}e^{\lambda^n\mu x}}{\sum_{n\in\N_0}\frac{\rho^n}{(\lambda,\lambda)_{n}}e^{\lambda^n\mu b}}\]
and the numerator is given by
\begin{align*}
\sum_{n\in\N}\frac{\rho^n}{(\lambda,\lambda)_{n-1}}e^{\lambda^n\mu x}=&\sum_{k\in\N_0}\frac{(\mu x)^k}{k!}\sum_{n\in\N}\frac{(\rho \lambda^k)^n}{(\lambda,\lambda)_{n-1}}\\
=&\rho \sum_{k\in\N_0}\frac{(\mu x \lambda)^k}{k!}\sum_{n\in\N}\frac{(\rho \lambda^k)^{n-1}}{(\lambda,\lambda)_{n-1}}\\
=&\rho \sum_{k\in\N_0}\frac{(\mu x \lambda)^k}{k!}\frac{1}{(\rho \lambda^k,\lambda)_\infty}\\
=&\frac{\rho}{(\rho,\lambda)_\infty} \sum_{k\in\N_0}\frac{(\rho,\lambda)_k(\mu \lambda x)^k}{k!}.
\end{align*}
Note that we used the $q$-binomial-theorem in the third step (see \cite[(1.3.15)]{gr} for a proof).
An analogous calculation for the denominator yields
\[\sum_{n\in\N_0}\frac{\rho^n}{(\lambda,\lambda)_{n}}e^{\lambda^n\mu b}=\frac{1}{(\rho,\lambda)_\infty}\sum_{k\in\N_0}\frac{(\rho,\lambda)_k (\mu b)^k}{k!}\]
and we obtain

\begin{satz}\label{satz:exp}
If $S_1$ is $\Exp(\mu)$-distributed and $T_1=0$ it holds that
\[E_x(\rho^{\tau_b})=\rho\cdot\frac{\sum_{k\in\N_0}(\rho,\lambda)_k \frac{(\mu x \lambda)^k}{k!}}{\sum_{k\in\N_0}(\rho,\lambda)_k\frac{(\mu b)^k}{k!}}\mbox{~~~~for all $x< b$}.\]
\end{satz}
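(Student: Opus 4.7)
The plan is to specialize the general formula from the preceding discussion and then apply $q$-series identities to compress it into the claimed closed form. Specifically, we start from the identity derived just before the statement,
\[
E_x(\rho^\tau)=\frac{\sum_{n\in\N}e^{\lambda^n\mu x-\phi(\lambda^n\mu)}\rho^n}{\sum_{n\in\N_0}e^{\lambda^n\mu b-\phi(\lambda^{n+1}\mu)}\rho^n},
\]
which is the direct consequence of Theorem~\ref{gleichung} and Corollary~\ref{koro:unabh} in the one-dimensional setting $m=1$, $\alpha=1$, $Q=(-\mu)$, $q=\mu$, with $\psi_2\equiv 0$ because $T_1=0$.

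The first task is to evaluate $e^{\phi(\lambda^n\mu)}$ explicitly. Using the product formula (\ref{eq:laplace-id2}) together with the Laplace transform $e^{\psi_1(u)}=\mu/(\mu-u)$ of $\Exp(\mu)$, one obtains
\[
e^{\phi(\lambda^n\mu)}=\prod_{k=0}^\infty\frac{1}{1-\lambda^{n+k}}=\frac{(\lambda,\lambda)_{n-1}}{\phi_e(\lambda)},
\]
where $(a,q)_n$ is the $q$-Pochhammer symbol and $\phi_e$ the Euler function. Substituting this into the quotient cancels the Euler factor and reduces the problem to evaluating the two sums
\[
N(x)=\sum_{n\in\N}\frac{\rho^n}{(\lambda,\lambda)_{n-1}}e^{\lambda^n\mu x},\qquad D(b)=\sum_{n\in\N_0}\frac{\rho^n}{(\lambda,\lambda)_{n}}e^{\lambda^n\mu b}.
\]

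The main step is then to expand each exponential $e^{\lambda^n\mu x}=\sum_{k\geq 0}(\mu x)^k\lambda^{nk}/k!$ and interchange the order of summation; the inner sum becomes a geometric-type series in $\rho\lambda^k$ that is identified by the $q$-binomial theorem $\sum_{n\geq 0}z^n/(\lambda,\lambda)_n=1/(z,\lambda)_\infty$. After this rewriting, $N(x)$ yields $\rho(\rho,\lambda)_\infty^{-1}\sum_k(\rho,\lambda)_k(\mu\lambda x)^k/k!$ and analogously for $D(b)$, and taking the quotient makes the common factor $(\rho,\lambda)_\infty^{-1}$ cancel, producing exactly the formula claimed in the theorem.

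The only nontrivial point is to justify the interchange of the double summation and to check that the specialization to $T_1=0$ is harmless in Theorem~\ref{gleichung}: absolute convergence follows from $|\lambda|<1$ and the fact that $(\rho,\lambda)_k$ stays bounded, while the assumption (\ref{eq:eigenwert}) is trivially satisfied in one dimension. The calculations are otherwise routine manipulations of $q$-series, with the $q$-binomial theorem doing all the essential work.
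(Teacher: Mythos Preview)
Your proposal is correct and follows essentially the same route as the paper: you start from the quotient formula for $E_x(\rho^\tau)$ obtained by specializing Theorem~\ref{gleichung} to $m=1$, $T_1=0$, evaluate $e^{\phi(\lambda^n\mu)}$ as a $q$-Pochhammer ratio via the infinite product~(\ref{eq:laplace-id2}), expand the exponentials, interchange summations, and apply the $q$-binomial theorem to collapse each sum, with the factor $(\rho,\lambda)_\infty^{-1}$ cancelling in the quotient. Your remarks on absolute convergence and on assumption~(\ref{eq:eigenwert}) being automatic in dimension one are accurate and add a bit of justification the paper leaves implicit.
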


For the special case $\mu=1$ this formula was obtained by an approach via differential equations based on the generator in \cite[Theorem 3]{N}.\\
Noting that 
\[E_{b}(\rho^{\tau_{b+}})=\rho\cdot\frac{\sum_{k\in\N_0}(\rho,\lambda)_k \frac{(\mu b \lambda)^k}{k!}}{\sum_{k\in\N_0}(\rho,\lambda)_k\frac{(\mu b)^k}{k!}}\]
by direct calculation we find
\[\frac{d}{db} E_x(\rho^{\tau_b})=E_x(\rho^{\tau_b})\mu (E_{b}(\rho^{\tau_{b+}})-1).\]
This reproduces Theorem 3.3 in \cite{CIN}. Note that in that article the stopping time $\tilde{\tau}_b=\inf\{n\in\N_0:X_t>b\}$ was considered. But this leads to analogous results since
\[\tau_{b+}=\tilde{\tau}_b\mbox{~~~under $P_b$}\]
and
\[\tau_b=\tilde{\tau}_b~~~\mbox{$P_x$ a.s. for all~}x\not=b.\]

\subsection{The general case}
Theorem \ref{gleichung} gives a powerful tool for the explicit calculation of $\Phi$ in many cases of interest as follows:\\
By Lemma \ref{lem:eta} we see that $\eta_{\delta,i}$ is a rational function of $\delta$ with poles in $Sp(-Q)$ for all $i=1,..,m$. We assume for simplicity that all eigenvalues are pairwise different (for the general case see the remark at the end of the section). Then partial fraction decomposition yields the representation
\[\eta_{\delta,i}=\sum_{j=1}^m\frac{a_{i,j}}{\mu_j-\delta}\mbox{~~~~for some $a_{i,1},...,a_{i,m}$}\]
and since $h_\delta(x)$ is rational in $\delta$ with the same poles we may write
\[h_\delta(x)=\sum_{j=1}^m\frac{c_j(x)}{\mu_j-\delta}\mbox{~~for some $c_{1}(x),...,c_{m}(x)$}.\]
Theorem \ref{gleichung} reads
\[\sum_{j=1}^m\frac{\sum_{i=1}^ma_{ij}\Phi_i(x)}{\mu_j-\delta}=\sum_{j=1}^m\frac{c_j(x)}{\mu_j-\delta}\]
and the uniqueness of the partial fraction decomposition yields
\[\sum_{i=1}^ma_{ij}\Phi_i(x)=c_j(x),\]
i.e.
\[A\Phi(x)=c(x),\]
where $A=(a_{ij})_{i,j=1}^m$, $c(x)=(c_j(x))_{j=1}^m$. This leads to

\begin{satz}
If $A$ is invertible, then $\Phi(x)$ is given by
\[\Phi(x)=A^{-1}c(x)\mbox{~~for all $x< b$}.\]
\end{satz}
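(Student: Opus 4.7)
The theorem is essentially a two-line consequence of the discussion immediately preceding it, and the plan is simply to verify that the linear system $A\Phi(x)=c(x)$ indicated there is rigorously derived, and then apply $A^{-1}$. Concretely, I would start from Theorem \ref{gleichung}, which asserts that
\[
\sum_{i=1}^m \eta_{\delta,i}\Phi_i(x) = h_\delta(x)
\]
for all $\delta\in\C_+\setminus Sp(-Q)$ and all $x<b$. Both sides are rational functions of $\delta$ whose only singularities lie in $Sp(-Q)=\{\mu_1,\ldots,\mu_m\}$ (for $\eta_{\delta,i}$ this is Lemma \ref{lem:eta}; for $h_\delta(x)=e^{\delta x}\mathds{1}_{\{x\geq b\}}+\beta_{1,\delta}f_1(x)q$ the $\delta$-dependence enters only through $\beta_{1,\delta}$, which in turn inherits its poles from $\alpha_\delta=\rho\alpha(-\delta I-Q)^{-1}e^{(\delta I+Q)b+\psi_2(-Q)}$). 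Under the simplifying assumption of pairwise distinct eigenvalues $\mu_j$ these poles are simple, so the partial fraction decompositions
\[
\eta_{\delta,i}=\sum_{j=1}^m\frac{a_{i,j}}{\mu_j-\delta},\qquad h_\delta(x)=\sum_{j=1}^m\frac{c_j(x)}{\mu_j-\delta}
\]
are valid and unique.

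The next step is to compare coefficients. Substituting the two decompositions into the identity above and interchanging the finite sums gives
\[
\sum_{j=1}^m\frac{\sum_{i=1}^m a_{i,j}\Phi_i(x)}{\mu_j-\delta}=\sum_{j=1}^m\frac{c_j(x)}{\mu_j-\delta}.
\]
The functions $\{(\mu_j-\delta)^{-1}\}_{j=1}^m$ are linearly independent as meromorphic functions of $\delta$ (the residue of the left-hand side at $\mu_j$ is precisely $-\sum_{i}a_{i,j}\Phi_i(x)$, and analogously on the right), so matching residues yields $\sum_{i=1}^m a_{i,j}\Phi_i(x)=c_j(x)$ for every $j$, i.e., $A\Phi(x)=c(x)$.

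Finally, since $A$ is invertible by hypothesis, left-multiplying by $A^{-1}$ gives $\Phi(x)=A^{-1}c(x)$, as claimed. The argument has no real obstacle beyond the clean invocation of uniqueness of partial fraction expansions; the only thing one must be careful about is the simple-pole assumption, and the author already flags this by noting that the general case (with repeated eigenvalues) is handled by an analogous argument using the appropriate partial fraction template with higher-order terms $(\mu_j-\delta)^{-k}$.
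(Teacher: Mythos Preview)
Your proposal is correct and follows the paper's approach essentially verbatim: the paper does not even give a separate proof block for this statement, since the derivation of $A\Phi(x)=c(x)$ via Theorem~\ref{gleichung}, partial fraction decomposition, and uniqueness of the coefficients is exactly the discussion immediately preceding the theorem. Your only addition is spelling out the residue/linear-independence justification for comparing coefficients, which is a harmless elaboration of what the paper calls ``uniqueness of the partial fraction decomposition.''
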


\begin{anmerk}
Note that the assumption of distinct eigenvalues was made for simplicity only. When it is not fulfilled we can use the general partial fraction decomposition formula and obtain the analogous result.
\end{anmerk}

\section{Applications to optimal stopping}\label{threshold}
To tackle the optimal stopping problem 
\[v(x)=\sup_{\tau\in\mathcal{T}} E_x(\rho^\tau g(X_\tau))\]
it is useful to reduce the (infinite dimensional) set of stopping times to a finite dimensional subclass. A often used class for this kind of problems is the class of threshold-times. This reduction can be carried out in two different ways:

\begin{enumerate}[(a)]
\item \label{it:a} We use elementary arguments to reduce the set of potential optimal stopping times to the subclass of threshold-times, i.e. to stopping times of the form
\[\tau_b=\inf\{t\geq 0: X_t\geq b\}\]
for some $b\in\R$. Then we find the optimal threshold. A summary of examples where is approach can be applied is given below.

\item \label{it:b} We make the ansatz that the optimal stopping time is of threshold type, identify the optimal threshold and use a verification theorem to prove that this stopping time is indeed optimal.
\end{enumerate}

%We want to remark that under very general conditions it holds that $\tau_b<\infty$ $P_x$-a.s. for all $x,b \in\R$ -- cf. Theorem 2 in \cite{NK} and the discussion on the existence of a stationary distribution in Section \ref{martingales}. These conditions are fulfilled for all cases of interest in the following.

On (\ref{it:a}): In \cite[Section 2]{CIN} the idea of an elementary reduction of optimal stopping problems to threshold-problems was studied in detail. For arbitrary innovations this approach can be applied to power gain function, i.e. $g(x)=x^n,n\in\N$. This problem that is known as the Novikov-Shiryaev problem and was completely solved for random walks in \cite{NS} and \cite{NS2}. Furthermore the approach applies to gain functions of call-type $g(x)=(x-K)^+$.\\
For the special case of nonnegative innovations a much wider class of gain functions can be handled such as exponential functions.\\

On (\ref{it:b}): To use the second approach described above the following easy verification theorem is useful.
\begin{lem}
Let $b^*\in\R$, write $v^*(x)=E_x(\rho^{\tau_{b^*}} g(X_{\tau_{b^*}}))$ and assume that
\begin{enumerate}[(a)]
\item $v^*(x)\geq g(x)$ for all $x<b^*$.
\item $E(\rho v^*(\lambda x+Z_1))\leq v^*(x)$ for all $x\in\R$. 
\end{enumerate}
Then $v=v^*$ and $\tau_{b^*}$ is optimal. 
\end{lem}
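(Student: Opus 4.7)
The plan is a classical verification argument via the supermartingale characterisation of the value function. The first step is to observe that $v^{*} \geq g$ holds on all of $\R$: on $\{x < b^{*}\}$ this is assumption (a), and for $x \geq b^{*}$ the threshold time $\tau_{b^{*}}$ equals $0$ under $P_{x}$, so $v^{*}(x) = g(x)$. Note also that $v^{*} \geq 0$ everywhere since $g \geq 0$ by standing hypothesis.

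Second, I would combine assumption (b) with the Markov property to show that $(\rho^{n} v^{*}(X_{n}))_{n \in \N_{0}}$ is a non-negative $P_{x}$-supermartingale with respect to $(\mathcal{F}_{n})$. Indeed, since $X_{n+1} = \lambda X_{n} + Z_{n+1}$ and $Z_{n+1}$ is independent of $\mathcal{F}_{n}$ with the same distribution as $Z_{1}$,
\[
E_{x}\bigl(\rho^{n+1} v^{*}(X_{n+1}) \mid \mathcal{F}_{n}\bigr) \;=\; \rho^{n} \cdot \rho\, E\bigl(v^{*}(\lambda y + Z_{1})\bigr)\Big|_{y = X_{n}} \;\leq\; \rho^{n} v^{*}(X_{n})
\]
by (b); integrability propagates iteratively from $v^{*}(x) < \infty$. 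Then, for any $\tau \in \mathcal{T}$, optional sampling at the bounded stopping time $\tau \wedge n$ yields
\[
v^{*}(x) \;\geq\; E_{x}\bigl(\rho^{\tau \wedge n} v^{*}(X_{\tau \wedge n})\bigr) \;\geq\; E_{x}\bigl(\rho^{\tau} g(X_{\tau}) \mathds{1}_{\{\tau \leq n\}}\bigr),
\]
where on $\{\tau \leq n\}$ I use $v^{*} \geq g$ and on $\{\tau > n\}$ I use $v^{*} \geq 0$. Monotone convergence as $n \to \infty$ (together with the convention that the payoff vanishes on $\{\tau = \infty\}$) gives $v^{*}(x) \geq E_{x}(\rho^{\tau} g(X_{\tau}))$, hence $v^{*} \geq v$. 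The reverse inequality is immediate because $\tau_{b^{*}} \in \mathcal{T}$ forces $v(x) \geq E_{x}(\rho^{\tau_{b^{*}}} g(X_{\tau_{b^{*}}})) = v^{*}(x)$, so $v = v^{*}$ and $\tau_{b^{*}}$ attains the supremum.

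The only mildly technical step is the passage to the limit in the optional sampling inequality and the verification of integrability, both of which are handled cleanly by non-negativity of $v^{*}$ and monotone convergence; there is no genuine obstacle here. Note that nothing in the argument uses the phasetype assumption or any specific structural feature of the innovations — the entire substance of the optimal stopping problem is thus pushed into the (separate) task of producing a candidate $b^{*}$ for which assumptions (a) and (b) can actually be verified, which is what the continuous-fit procedure in Section \ref{cont_fit} is designed to do.
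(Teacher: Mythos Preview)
Your argument is correct and follows essentially the same route as the paper's own proof: establish the supermartingale property of $(\rho^{n}v^{*}(X_{n}))_{n}$ from assumption (b) and the i.i.d.\ structure of the innovations, apply optional sampling, and use $v^{*}\geq g$ together with the trivial reverse inequality. You are in fact slightly more careful than the paper, which invokes optional sampling directly for arbitrary $\tau\in\mathcal{T}$, whereas you truncate at $\tau\wedge n$ and pass to the limit via monotone convergence using $v^{*}\geq 0$.
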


\begin{proof}
By the independence of $(Z_n)_{n\in\N}$ property $(b)$ implies that $(\rho^nv^*(X_n))_{n\in\N}$ is a supermartingale under each measure $P_x$. Since it is positive the optional sampling theorem leads to
\[v^*(x)\geq \sup_{\tau\in\mathcal{T}}E_x(\rho^\tau v^*(X_\tau))\geq \sup_{\tau\in\mathcal{T}}E_x(\rho^\tau g(X_\tau))\mbox{~~~for all~}x\in\R,\]
where the second inequality holds by $(a)$ since $v^*(x)=g(x)$ for all $x\geq b$. On the other hand $v^*(x)\leq v(x)$, i.e. $v^*(x)=v(x)$ and $\tau_{b^*}$ is optimal.
\end{proof}

\section{On the explicit solution of the optimal stopping problem}\label{explicit}
Now we are prepared to solve the optimal optimal stopping problem
\[v(x)=\sup_\tau E_x(\rho^\tau g(X_\tau)),~~x\in\R.\]
Section \ref{threshold} gives conditions for the optimality of threshold-times. In this cases we can simplify the problem to
\[v(x)=\sup_b E_x(\rho^{\tau_b} g(X_{\tau_b})),~~x\in\R.\]
Now take an arbitrary starting point $x$. Then we have to maximize the real function
\[\Psi_x:(x,\infty) \rightarrow \R, b\mapsto E_x(\rho^{\tau_b} g(X_{\tau_b}))=\sum_{i=1}^m\Phi_i^b(x)E(g(b+R^i)),\]
where $R^i$ is $Ph(e_i,Q)$-distributed, $b\geq0$. The results of the previous section give rise to an explicit calculation of $\Phi_i^b(x)$ and of $\Psi_x$.\\
Hence we are faced with the well-studied maximization problem for real functions, that can -- e.g. -- be solved using the standard tools from differential calculus.\\
If we have found a maximum point $b^*$ of $\Psi_x$ and $\Psi_x(b^*)>g(x)$, then 
\[\tau^*=\inf\{n\in\N_0:X_n\geq b^*\}\]
is an optimal stopping time when $(X_n)_{n\in\N_0}$ is started in $x$. \\
A more elegant approach for finding the optimal threshold $b^*$ is the principle of continuous fit:

\section{The principle of continuous fit}\label{cont_fit}
The principles of smooth and continuous fit play an important role in the study of many optimal stopping problems. The principle of smooth fit was already introduced in \cite{mi} and has been applied in a variety of problems, ranging from sequential analysis to mathematical finance. The principle of continuous pasting is more recent and was introduced in \cite{ps2} as a variational principle to solve sequential testing and disorder problems for the Poisson process. For a discussion in the case of Lévy processes and further references we refer to \cite{CI}. Another overview is given in \cite[Chapter IV.9]{ps} and one may summarize, see the above reference, p. 49:

``If $X$ enters the interior of the stopping region $S$ immediately after starting on $\partial S$, then the optimal stopping point $x^*$ is selected so that the value function $v$ is smooth in $x^*$. If $X$ does not enter the interior of the stopping region immediately, then $x^*$ is selected so that $v$ is continuous in $x^*$.''

Most applications of this principle involve processes in continuous time. In discrete time an immediate entrance is of course not possible, so that one can not expect the smooth-fit principle to hold. In this section we prove that the continuous-fit principle holds in our setting and illustrate how it can be used for an easy determination of the optimal threshold. \\
We keep the notations and assumptions of the previous sections and -- as before -- we assume that the optimal stopping set is an interval of the form $[b^*,\infty)$ and consider the optimal stopping time $\tau_{b^*}=\tau=\inf\{n\in\N_0:X_n\geq b^*\}$.\\
Furthermore we assume that 
\begin{equation}\label{stet}
\lim_{\epsilon\searrow0}\Phi^{b^*}_i(b^*-\epsilon)=\lim_{\epsilon\searrow0}\Phi^{b^*+\epsilon}_i(b^*)\mbox{~~for all~}i=1,...,m.
\end{equation}
Note that this condition is obviously fulfilled in the cases discussed above. If $g$ is continuous under an appropriate integrability condition it furthermore holds that
\begin{equation}\label{g_stet}
E(g(R_i+\epsilon+b^*))\rightarrow E(g(R_i+b^*))\mbox{~~~as~}\epsilon\rightarrow 0,~~i=1,...,m.
\end{equation}

\begin{prop}
Assume (\ref{stet}) and (\ref{g_stet}). Then it holds that
\[\lim_{b\nearrow b^*}v(b)=g(b^*).\]
\end{prop}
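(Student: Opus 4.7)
The plan is to sandwich $\lim_{b\nearrow b^*} v(b)$ between two copies of $g(b^*)$. The easy half is the lower bound: since the stopping rule $\tau\equiv 0$ is always admissible, $v(b)\geq g(b)$ pointwise, and continuity of $g$ immediately yields $\liminf_{b\nearrow b^*} v(b)\geq g(b^*)$.

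For the matching upper bound my plan is to exploit the optimality of the threshold $b^*$. Because $b^*$ lies in the stopping region, $\tau_{b^*}=0$ under $P_{b^*}$, hence $v(b^*)=g(b^*)$. Any other admissible stopping rule---in particular $\tau_{b^*+\epsilon}$ with $\epsilon>0$---yields a value no larger, so
\[g(b^*)=v(b^*)\geq E_{b^*}\bigl(\rho^{\tau_{b^*+\epsilon}}g(X_{\tau_{b^*+\epsilon}})\bigr).\]
The next step is to expand the right-hand side by Corollary~\ref{koro:unabh} as $\sum_{i=1}^m \Phi_i^{b^*+\epsilon}(b^*)\,E(g(b^*+\epsilon+R^i))$, let $\epsilon\searrow 0$, and invoke (\ref{g_stet}) for the factors $E(g(b^*+\epsilon+R^i))$ together with (\ref{stet}) to replace $\lim_{\epsilon\searrow 0}\Phi_i^{b^*+\epsilon}(b^*)$ by $\lim_{\epsilon\searrow 0}\Phi_i^{b^*}(b^*-\epsilon)$. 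This will give
\[g(b^*)\geq \sum_{i=1}^m \Bigl(\lim_{\epsilon\searrow 0}\Phi_i^{b^*}(b^*-\epsilon)\Bigr) E\bigl(g(b^*+R^i)\bigr)=\lim_{b\nearrow b^*} v(b),\]
where the last equality is Corollary~\ref{koro:unabh} applied a second time, now to $v(b)=E_b(\rho^{\tau_{b^*}}g(X_{\tau_{b^*}}))$ on the continuation region $(-\infty,b^*)$.

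Conceptually this is the discrete-time analogue of the classical continuous-fit trick: compare the value at $b^*$ (computed on the stopping side) with the value of postponing stopping until the raised threshold $b^*+\epsilon$, and let $\epsilon\searrow 0$ from the right while simultaneously the starting point $b$ rises to $b^*$ from the left. The main obstacle is not computational but conceptual: one has to recognise that the one-sided limits from the continuation side ($b\nearrow b^*$) and from the stopping side ($\epsilon\searrow 0$ above $b^*$) actually coincide under the standing assumptions---this is exactly what (\ref{stet}) and (\ref{g_stet}) were tailored to encode---after which only passing to the limit inside a finite sum of bounded quantities remains, which is routine.
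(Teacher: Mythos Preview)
Your proof is correct and follows essentially the same route as the paper: both obtain the lower bound from $v\geq g$ on the continuation region, and both obtain the upper bound by comparing $v(b^*)=g(b^*)$ with $E_{b^*}(\rho^{\tau_{b^*+\epsilon}}g(X_{\tau_{b^*+\epsilon}}))$, expanding via Corollary~\ref{koro:unabh}, and passing to the limit through (\ref{stet}) and (\ref{g_stet}). The only cosmetic difference is that the paper subtracts the two expansions and shows the difference tends to zero, whereas you pass to the limit on each side separately.
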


\begin{proof}
Let $\epsilon>0$. First note that $v(b)> g(b)$ for all $b<b^*$ so that 
\[\liminf_{b\nearrow b^*}v(b)\geq \liminf_{b\nearrow b^*}g(b)= g(b^*).\]
Furthermore using Corollary \ref{koro:unabh}
\begin{align*}
v(b^*-\epsilon)-g(b^*)&=E_{b^*-\epsilon}(\rho^\tau g(X_\tau))-v(b^*)\\
&\leq E_{b^*-\epsilon}(\rho^\tau g(X_\tau))-E_{b^*}(\rho^{\tau_{b^*+\epsilon}} g(X_{\tau_{b^*+\epsilon}}))\\
&=\sum_{i=1}^m\left(\Phi_i^{b^*}(b^*-\epsilon)E(g(R_i+b^*))-\Phi_i^{b^*+\epsilon}(b^*)E(g(R_i+\epsilon+b^*))\right)\rightarrow 0
\end{align*}
as $\epsilon\searrow0$. This proves $\limsup_{b\nearrow b^*}v(b)\geq g(b^*)$.
\end{proof}

\begin{figure}[ht]
\begin{center}
\includegraphics[width=8.5cm]{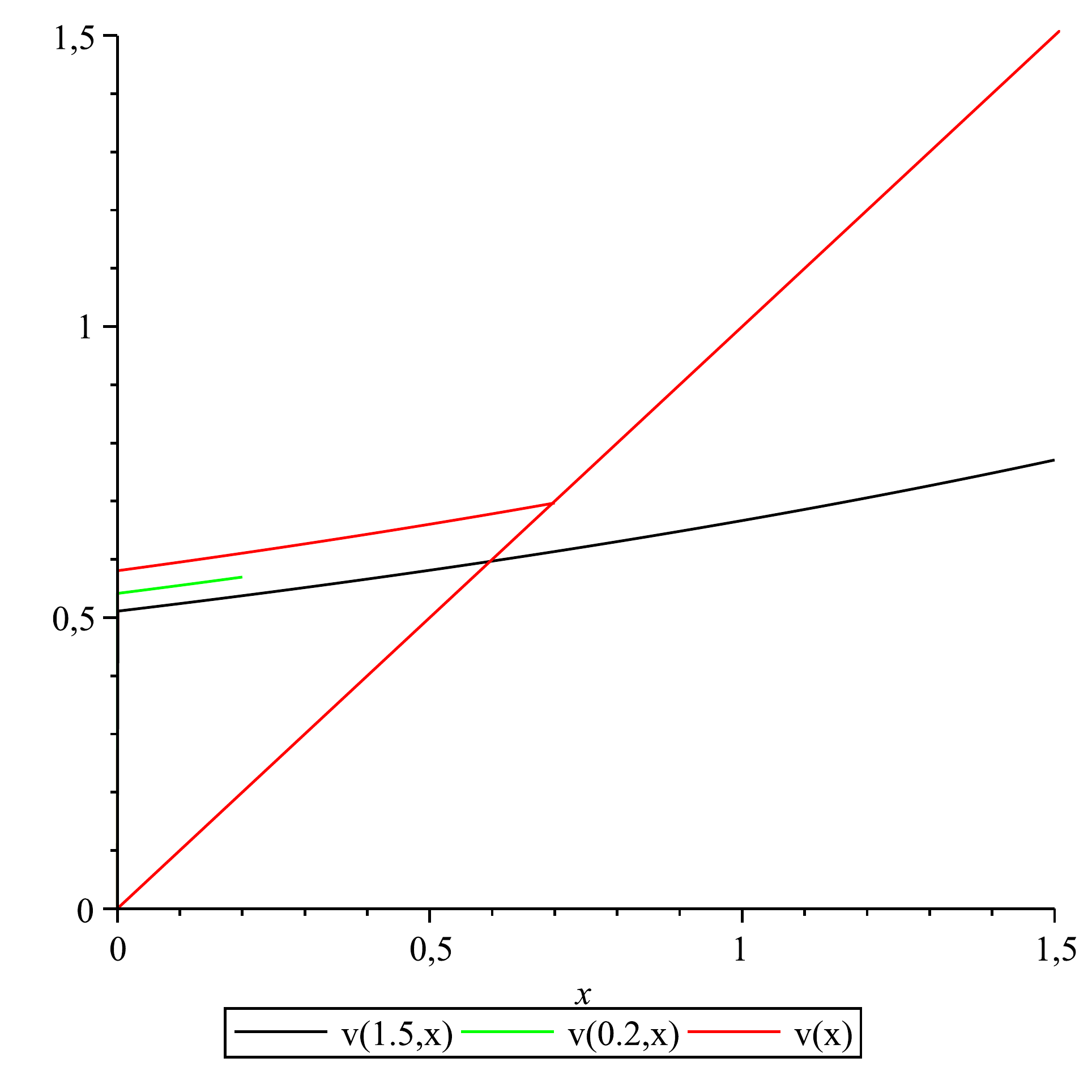}
\caption{Some candidate solutions for different thresholds in the case $g(x)=x$}\label{fig:cont_fit_auto}
\end{center}
\end{figure}

Figure \ref{fig:cont_fit_auto} illustrates how the continuous fit principle can be used: We consider the candidate solutions
\[v(b,x)=\begin{cases}
  \Psi_x(b)  &,~ x< b \\
  g(x) &,~ x\geq b
\end{cases}\]
and solve the equation $\Psi_{b-}(b)=g(b)$, where $\Psi_\cdot$ is defined as in the previous section. If the equation has a unique solution we can conclude that this solution must be the optimal threshold as illustrated in the following Section.

\section{Example}\label{example_autoreg}
We consider the gain function $g(x)=x$ and $\Exp(\mu)$-distributed innovations; in this setting we always assume $(X_n)_{n\in\N_0}$ to have values in $[0,\infty)$. The discussion in Section \ref{threshold} guarantees that the optimal stopping time is of threshold-type. The optimal threshold can be found by the continuous fit principle described in the previous section:\\
The problem is solved if we find a unique $b^*\in[0,\infty)$ that solves the equation
\[b=\Psi_{b-}(b)=\Phi_{b-}(b)(b+\frac{1}{\mu})=\frac{\rho\sum_{k\in\N_0}(\rho,\lambda)_k \frac{(\mu b \lambda)^k}{k!}}{\sum_{k\in\N_0}(\rho,\lambda)_k\frac{(\mu b)^k}{k!}}(b+\frac{1}{\mu}),\]
where we used Theorem \ref{satz:exp} in the last step. This equation is equivalent to
\begin{align*}
&\sum_{k=0}^\infty(\rho,\lambda)_k\frac{\mu^k}{k!}b^{k+1}=\sum_{k=0}^\infty\rho(\rho,\lambda)_k\frac{\mu^k\lambda^k}{k!}b^{k+1}+\sum_{k=0}^\infty\frac{\rho}{\mu}(\rho,\lambda)_k\frac{\mu^k\lambda^k}{k!}b^{k}\\
\mbox{i.e.~~}& \frac{\rho}{\mu}-\sum_{k=0}^\infty(\rho,\lambda)_k\frac{\mu^k}{k!}(1-\rho\lambda^k-\frac{\rho}{\mu}(1-\rho\lambda^k)\frac{\mu\lambda^{k+1}}{k+1})b^{k+1}=0\\
\mbox{i.e.~~}& f(b)=0,
\end{align*}
where 
\[f(b)=\frac{\rho}{\mu}-\sum_{k=0}^\infty(\rho,\lambda)_{k+1}\frac{\mu^k}{k!}(1-\frac{\rho\lambda^{k+1}}{k+1})b^{k+1}.\]
Note that $f(0)=\frac{\rho}{\mu}>0$ and
\[f'(b)=-\sum_{k=0}^\infty(\rho,\lambda)_{k+1}\frac{\mu^k}{k!}(1-\frac{\rho\lambda^{k+1}}{k+1})(k+1)b^{k}<0 \mbox{~for all~}b\in[0,\infty).\]
Since $f(b)\leq \frac{\rho}{\mu}-(1-\rho)(1-\rho\lambda)b$ we furthermore obtain $f(b)\rightarrow-\infty$ for $b\rightarrow\infty$. Hence there exists a unique solution $b^*$ of the transcendental equation $f(b)=0$.\\
The optimal stopping time is
\[\tau^*=\inf\{n\in\N:X_n\geq b^*\}\]
and the value function is given by 
\[v(x)=\begin{cases}
  (b+\frac{1}{\mu})\rho\cdot\frac{\sum_{k\in\N_0}(\rho,\lambda)_k \frac{(\mu x \lambda)^k}{k!}}{\sum_{k\in\N_0}(\rho,\lambda)_k\frac{(\mu b^*)^k}{k!}}  &,~ x< b^* \\
  x &,~ x\geq b^*.
\end{cases}\]
In Figure \ref{fig:cont_fit_auto} $v$ is plotted for the parameters $\mu=1$, $\rho=\lambda=1/2$.\\

\bibliographystyle{plain}
\bibliography{ar1phase}

\end{document}